\numberwithin{equation}{section}
\definecolor{SmartBlue}{RGB}{51, 51, 255}
\newtheorem{theorem}{Theorem}[section]
\newtheorem{proposition}[theorem]{Proposition}
\newtheorem{lemma}[theorem]{Lemma}
\newtheorem{definition}[theorem]{Definition}
\newtheorem{remark}[theorem]{Remark}
\newtheorem{example}[theorem]{Example}
\newtheorem*{orient*}{Orientation Condition}
\newtheorem*{junction*}{Junction Conditions}
\def\Mkp{\mathbb{M}^+}
\def\Mkm{\mathbb{M}^-}
\def\Mkpm{\mathbb{M}^{\pm}}
\def\flatp{\eta^+}
\def\flatm{\eta^-}
\def\flatpm{\eta^{\pm}}
\def\hatv{\hat{v}}
\def\hatz{\hat{z}}
\def\supp{\textup{supp}}
\def\qfunction{q}
\newcommand{\cx}{\mathcal{X}}
\newcommand{\heav}{\theta(u)}
\newcommand{\up}{u_{+}(u)}
\def\defi{{\stackrel{\mbox{\tiny {\textbf{def}}}}{\,\, = \,\, }}}
\def\metdata{\{\mathcal{N},\gamma,\ellc,\elltwo\}}
\def\fv{\epsilon}
\def\n{\mathfrak{n}}
\def\Fcal{\mathcal F}
\def\bY{\textup{\textbf{Y}}}
\def\Y{\textup{Y}}
\newcommand{\nn}{\nonumber}
\newcommand{\bm}[1]{\mbox{\boldmath $#1$}}
\newcommand\N{\mathcal N}
\newcommand\M{\mathcal M}
\newcommand\elltwo{\ell^{(2)}}
\newcommand\hypdata{\{ \mathcal{N},\gamma,\ellc, \elltwo, \bY\}}
\newcommand\rig{\zeta}
\newcommand\A{\mathcal A}
\def\bmell{\bm{\ell}}
\def\ellc{\bmell}
\def\defi{{\stackrel{\mbox{\tiny \textup{\textbf{def}}}}{\,\, = \,\, }}}
\def\nabh{\nabla^h}
\def\metdata{\{\mathcal{N},\gamma,\ellc,\elltwo\}}
\def\n{\mathfrak{n}}
\def\Fcal{\mathcal F}
\def\bY{\textup{\textbf{Y}}}
\def\Y{\textup{Y}}
\def\bmell{\bm{\ell}}
\def\ellc{\bmell}
\newcommand{\ov}{\overline}
\newcommand{\nfi}{\varphi}
\newcommand{\cp}{\partial}
\newcommand{\eps}{\varepsilon}
\newcommand{\bs}{\boldsymbol}
\newcommand{\lp}{\left(}
\newcommand{\rp}{\right)}
\newcommand{\cu}{\mathcal{U}}
\newcommand{\cv}{\mathcal{V}}
\newcommand{\Mp}{\mathcal{M^+}}
\newcommand{\Ml}{\mathcal{M^-}}
\newcommand{\lb}{\left\lbrace}
\newcommand{\rb}{\right\rbrace}
\newcommand{\lc}{\left[}
\newcommand{\rc}{\right]}
\newcommand{\ld}{\left.}
\newcommand{\rd}{\right.}
\newcommand{\rv}{\right\vert}
\newcommand{\la}{\langle}
\newcommand{\ra}{\rangle}
\newcommand{\Mpm}{\mathcal{M}^{\pm}}
\newcommand{\wt}{\widetilde}
\newcommand{\tcte}{}
\newcommand{\nullhyp}{\widetilde{\N}}
\newcommand{\spc}{\textup{ }}
\newsavebox\myboxA
\newsavebox\myboxB
\newlength\mylenA
\newcommand*\xoverline[2][0.75]{%
    \sbox{\myboxA}{$\m@th#2$}%
    \setbox\myboxB\null
    \ht\myboxB=\ht\myboxA%
    \dp\myboxB=\dp\myboxA%
    \wd\myboxB=#1\wd\myboxA
    \sbox\myboxB{$\m@th\overline{\copy\myboxB}$}
    \setlength\mylenA{\the\wd\myboxA}
    \addtolength\mylenA{-\the\wd\myboxB}%
    \ifdim\wd\myboxB<\wd\myboxA%
       \rlap{\hskip 0.5\mylenA\usebox\myboxB}{\usebox\myboxA}%
    \else
        \hskip -0.5\mylenA\rlap{\usebox\myboxA}{\hskip 0.5\mylenA\usebox\myboxB}%
    \fi}
\newcommand{\hor}{\mathscr{H}}
 \newcounter{mnotecount}
 \newcommand{\mnote}[1]
 {\protect{\stepcounter{mnotecount}}$^{\mbox{\tiny
 $\,\bullet$\themnotecount}}$ \marginpar{
 \raggedright\tiny\em
 $\,\bullet$\themnotecount: #1} }
\title{Generalizing the Penrose cut-and-paste method:\\ Null shells with pressure and energy flux
}
\author{
	Miguel Manzano$^1$\thanks{{\tt m.manzano.rod@gmail.com}}\ ,
	Argam Ohanyan$^2$\thanks{{\tt argam.ohanyan@utoronto.ca}}\ \ and Roland Steinbauer$^1$\thanks{{\tt roland.steinbauer@univie.ac.at}} \\ \\
	$^1$ Faculty of Mathematics, University of Vienna, \\
	Oskar-Morgenstern-Platz 1, 1090 Vienna, Austria. \\ \\
    $^2$ Department of Mathematics, University of Toronto,\\ 45 St.\ George Street, M5S 2E5 Toronto, Ontario, Canada. 
}
\begin{document}

\maketitle

\begin{abstract}
The cut-and-paste method is a procedure for constructing null thin shells by matching two regions of the same spacetime across a null hypersurface. Originally proposed by Penrose, it has so far allowed to describe purely gravitational and null-dust shells  in constant-curvature backgrounds.\ In this paper, we extend the cut-and-paste method to null shells with arbitrary gravitational/matter content. To that aim, we first derive a locally Lipschitz continuous form of the metric of the spacetime resulting from the most general matching of two constant-curvature spacetimes with totally geodesic null boundaries, and then obtain the coordinate transformation that turns this metric into the cut-and-paste form with a Dirac-delta term. The paper includes an example of a null shell with non-trivial energy density, energy flux and pressure in Minkowski space.
\end{abstract}

\tableofcontents

\section{Introduction}\label{sec:intro}

In nature, it is common to find neighbouring spacetime regions with different matter content, for instance the interior and exterior of a neutron star. Metric theories of gravity must hence be capable of matching spacetimes, i.e.\ of constructing one single spacetime by suitably gluing regions with distinct matter-energy properties.\ In particular, null matchings, i.e.\ matchings across a null hypersurface, are of special relevance in General Relativity 
since they are key to describing gravitational waves, as well as black holes and gravitational collapse.\ 
As a result, countless examples of null shells have been studied in various scenarios and contexts, see e.g.\ \cite{barrabes2003singular, khakshournia2023art, poisson2004relativist} and references therein.  
\medskip

There exist two approaches to null matchings:\ 
the \textit{standard matching theory} going back to Darmois \cite{darmois1927memorial} in the timelike/spacelike cases and later generalized to the null \cite{barrabes1991thin}
and to the arbitrary causal character cases  \cite{mars2007lorentzian,mars1993geometry}, and
Penroses's so-called \textit{``cut-and-paste" method} \cite{Penrose:1972xrn}.
\medskip

In the context of the former, particularly relevant to this paper are the recent works \cite{manzano2021null,manzano2022general,manzano2023matching}. Based on the 
\textit{formalism of hypersurface data} \cite{mars2013constraint,mars2020hypersurface,mars2024abstract}, they provide a \textit{completely general} approach---that is, without 
a-priori assumptions on the matching hypersurface or the spacetimes to be matched---which at the same time allows to obtain \textit{very explicit} results, e.g.\ the 
gravitational/matter content of \textit{any} null shell, or the first example of a null shell \textit{with pressure} in Minkowski space.\ It is worth emphasizing, however, that \cite{manzano2021null,manzano2022general,manzano2023matching} concentrate primarily on identifying the geometric objects that encode the full information of the matching  
and on deriving the expression for the energy-momentum tensor of the null shell, rather than on obtaining an explicit form of the metric of the matched spacetime.
\medskip

Alternatively Penrose has put forward his vivid cut-and-paste method \cite{Penrose:1972xrn}.\ Originally, he used it to construct impulsive $pp$-waves propagating in Minkowski space by cutting it into two ``halves'' along a null hyperplane $\nullhyp=\{\cu=0\}$, and reattaching them with a shift along the null generators of $\nullhyp$. This identification of boundary points with a jump along the null direction are called the Penrose junction conditions. The {matched} spacetime is then described by a metric with a Dirac $\delta$-term supported on $\nullhyp$,
\begin{equation}
	\label{CAP:Mk:metric} ds^2=-2d\cu d\cv+d\cx^2+d\mathcal{Y}^2+2\mathcal{H}(\cx,\mathcal{Y})\delta(\cu)d\cu^2,
\end{equation}
clearly displaying curvature concentrated on the single wave surface $\nullhyp$. In fact, this metric lies beyond the largest general class of metrics that enables a stable definition of (distributional) curvature \cite{geroch1987strings,lefloch2007definition,SV:09}, but the high symmetry nevertheless allows one to calculate the curvature explicitly.\ Penrose also has constructed expanding, spherical impulsive waves in Minkowski space, and later on the method has been extended to all constant-curvature backgrounds, see \cite[Ch.\ 20]{griffiths2009exact}, and also \cite{PS:22} for a recent pedagogical review.
\medskip

In his seminal work \cite{Penrose:1972xrn}, Penrose also has introduced a continuous form of the metric \eqref{CAP:Mk:metric}, which was clearly ``physically equivalent", but the precise relation between the two metrics remained obscure. In later works the geometric meaning of the notorious ``discontinuous coordinate transformation" relating the two metric forms has been made transparent \cite{S:98}, and formulated in a mathematically rigorous way \cite{KS:99}, recently even with a non-zero cosmological constant \cite{podolsky2019cut,samann2023cut}.

An important limitation of the cut-and-paste method is that, so far, it
has \textit{only} allowed for the construction of purely gravitational or null-dust shells.\  However, null shells with much more general matter contents have been built in \cite{manzano2021null,manzano2022general,manzano2023matching}---both in constant curvature backgrounds and in more general settings---, and in \cite{manzano2021null} a firm connection to the cut-and-paste method has been established. In this work we generalize the cut-and-paste method for all constant-curvature spacetimes to produce null shells with \textit{completely general} energy-momentum tensors.\ We also obtain explicit expressions for the corresponding metric forms, which encode the matter content of the shell in a transparent way in the metric functions.
\medskip

In particular, we first derive a locally Lipschitz continuous form of the metric $g$ of the spacetime $(\M,g)$ resulting from the \textit{most general} matching of two regions of Minkowski or (anti-)de Sitter space across a totally geodesic null hypersurface (Sections \ref{sec:lip:metric} and \ref{sec:AdS}). In this context, we also revisit the general matching of semi-Riemannian manifolds (Appendix \ref{app:B}) and prove that under 
natural mild conditions the metric of the matched manifold is locally Lipschitz (w.r.t.\ its natural smooth differentiable structure), thereby straightening an old result of \cite{clarke1987junction}. 
\medskip

Then we also derive a vivid distributional form of the metric, again displaying a Dirac-$\delta$ term supported on the matching hypersurface. To do so, we first introduce a ``discontinuous coordinate transformation" together with a distributional ansatz for $g$, and prove that the ansatz and the Lipschitz continuous metric forms are related by the proposed coordinate transformation (Sections \ref{sec:dist:metric} and \ref{sec:AdS}). Here we defer some technical calculations regarding regularizations of distributional products to Appendix \ref{app:distributional:identities}. While our treatment here cannot be considered final, we firmly prepare the ground for future studies of the geodesics in these distributional geometries, which we expect in analogy to the classical cut-and-paste case to open the gates to a full nonlinear distributional analysis of the notorious transformation, cf.\ \cite{ SSLP:16,SS:17,podolsky2019cut,samann2023cut}.
\medskip

Our results not only extend the cut-and-paste construction to generic null shells in spacetimes with constant curvature, but also yield the corresponding generalized Penrose junction conditions, which determine the shift along the null generators of the matching hypersurface. Finally, we include an explicit example of a null shell with non-trivial energy density, energy flux and pressure in Minkowski space (Section \ref{sec:ex:pressure:Mk:new}).

\subsection{Notation and conventions}
All manifolds are smooth and connected.\ Given a manifold $\M$ we use $\mathcal{F}\lp\mathcal{M}\rp\defi C^{\infty}\lp\mathcal{M},\mathbb{R}\rp$, and $\mathcal{F}^{\star}\lp\mathcal{M}\rp\subset\mathcal{F}\lp\mathcal{M}\rp$ for the subset of no-where zero functions.\  The tangent bundle is denoted 
by $T\mathcal{M}$, and $\Gamma\lp T\mathcal{M}\rp$ is the set
of sections (i.e.\ vector fields).\  We use $\pounds$, $d$ for the Lie derivative and exterior derivative.\ Given a differentiable
map $\phi$ between manifolds, we use the standard notation of $\phi^{\star}$ and $\phi_{\star}$ for
its pull-back and push-forward respectively.\ Both tensorial and abstract index notation will be employed.\ We work in arbitrary dimension $\mathfrak{n}$ and use the sets of indices
\begin{equation}
	\label{notation}
	\alpha,\beta,...=0,1,2,...,\mathfrak{n};\qquad a,b,...,i,j,...=1,2,...,\mathfrak{n};\qquad A,B,...,I,J,...=2,...,\mathfrak{n},
\end{equation}
\tcte{irrespectively of whether they are tensorial or identify elements in a set}.\ When index-free notation is used (and only then), we shall distinguish
covariant tensors with boldface.\ As usual, parenthesis denote symmetrization of indices.\ Our signature convention for Lorentzian manifolds $\lp \mathcal{M},g\rp$ is $(-,+, ... ,+)$, and we use $\nabla$ to denote the Levi-Civita derivative of $g$.

\section{Spacetime matching and the hypersurface data formalism}\label{sec:matching:2021}

In this section we recall the necessary facts on the null matching of spacetimes using the formalism of hypersurface data, with special emphasis on the results in \cite{manzano2021null,manzano2022general, manzano2023matching}.

Originally put forward in \cite{mars2013constraint,mars2020hypersurface} (see also \cite{manzano2023field,manzano2023constraint,mars2024transverseI,mars2024transverseII}), the formalism of hypersurface data relies on the notions of \textit{metric hypersurface data} and \textit{hypersurface data}, which we now introduce specifically in the null case. 

Let $\N$ be a smooth and connected $\n$-manifold endowed with a symmetric $(0,2)$-tensor $ \gamma $ \tcte{with one-dimensional radical (i.e.\ with signature $(0,+,...,+)$)}, a one-form $\ellc$ and a scalar function $\ell^{(2)}$.\ The tuple $\metdata$ defines \textit{null metric hypersurface data} provided that the square $(\n+1)$-matrix
\begin{equation}
	\bs{\A}\defi \lp \hspace{-0.1cm}
	\begin{array}{cc}
		\gamma_{ab} & \ell_a\\
		\ell_b & \elltwo
	\end{array}
	\hspace{-0.1cm}\rp
\end{equation}
is non-degenerate everywhere on $\N$.\ \textit{Null hypersurface data}, on the other hand, is a tuple $\lb \N, \gamma ,\ell,\ell^{(2)},\bY\rb$ consisting of null metric hypersurface data $\metdata$ equipped with an additional symmetric $(0,2)$-tensor $\bY$.

To connect these concepts with the geometry of embedded null hypersurfaces one uses the notion of \textit{embeddedness}. A null metric data $\metdata$ is said to be $\{\phi,\rig\}$-embedded in an $(\n+1)$-spacetime $\lp \mathcal{M},g\rp$ if there exists an embedding $\phi:\N\longhookrightarrow\mathcal{M}$ and
a vector field $\rig$ along $\phi(\N)$---called \textit{rigging}---everywhere transversal to $\phi(\N)$, that satisfy
\begin{equation}
	\label{emhd}
	\phi^*\lp g\rp= \gamma , \qquad\phi^*\lp g\lp\rig,\cdot\rp\rp=\ellc, \qquad\phi^*\lp g\lp\rig,\rig\rp\rp=\ell^{(2)}.
\end{equation}
For hypersurface data $\hypdata$, embeddedness requires the additional condition 
\begin{equation}
	\label{eqcf}
	\dfrac{1}{2}\phi^*\lp \pounds_{\rig}g\rp=\bY.
\end{equation}
When the data is embedded, it follows from \eqref{emhd}-\eqref{eqcf} that $\gamma$ coincides with the first fundamental form of the hypersurface, while $\ellc,\elltwo$ codify the transverse components of $g$.\ Thus, $\gamma,\ellc,\elltwo$ capture the full spacetime metric $g$ along $\phi(\N)$ and $\bY$ codifies first transverse derivatives of $g$ at $\phi(\N)$.

Now to proceed to the spacetime matching, let us consider two $(\n+1)$-dimensional spacetimes $(\mathcal{M}^\pm, g^\pm)$ with null boundaries $\nullhyp^{\pm}$.\ For $(\Mpm,g^{\pm})$ to be matchable across $\nullhyp^{\pm}$, they must fulfil the so-called  
\textit{junction conditions}, which have been formulated in various equivalent ways in the literature, see e.g.\  \cite{barrabes1991thin,bonnor1981junction,clarke1987junction,darmois1927memorial,israel1966singular}.\ In the language of hypersurface data they require  \cite{mars2013constraint} the existence of a null metric data $\metdata$, two embeddings $\phi^{\pm}:\N\longhookrightarrow\Mpm$ and two riggings $\rig^{\pm}$ (cf.\ Figure \ref{Fig:1}), such that $\metdata$ can be $\{\phi^{\pm},\rig^{\pm}\}$-embedded in both $(\Mp,g^+)$ and $(\Ml,g^-)$
so that
\hypertarget{junc-cond-FHD}{ } 
\begin{itemize}\itemsep0cm
\item[$(i)$]
$\phi^{\pm}(\N)=\nullhyp^{\pm} $, and
\item[$(ii)$]
one of the riggings $\rig^{\pm}$ points inwards and the other outwards with respect to $\Mpm$.\
\end{itemize}
The metric data
$\metdata$ and the pairs $\{\phi^{\pm},\rig^{\pm}\}$ must therefore satisfy (cf.\ \eqref{emhd})
\begin{align}
\label{junctcondAbs2} \gamma =(\phi^{\pm})^{\star}(g^{\pm}),\qquad
\ellc =(\phi^{\pm})^{\star}(g^{\pm}(\rig^{\pm},\cdot)),\qquad \elltwo =(\phi^{\pm})^{\star}(g^{\pm}(\rig^{\pm},\rig^{\pm})).
\end{align}

\begin{figure}[t!]
\centering
\includegraphics[angle=-90,width=14cm]{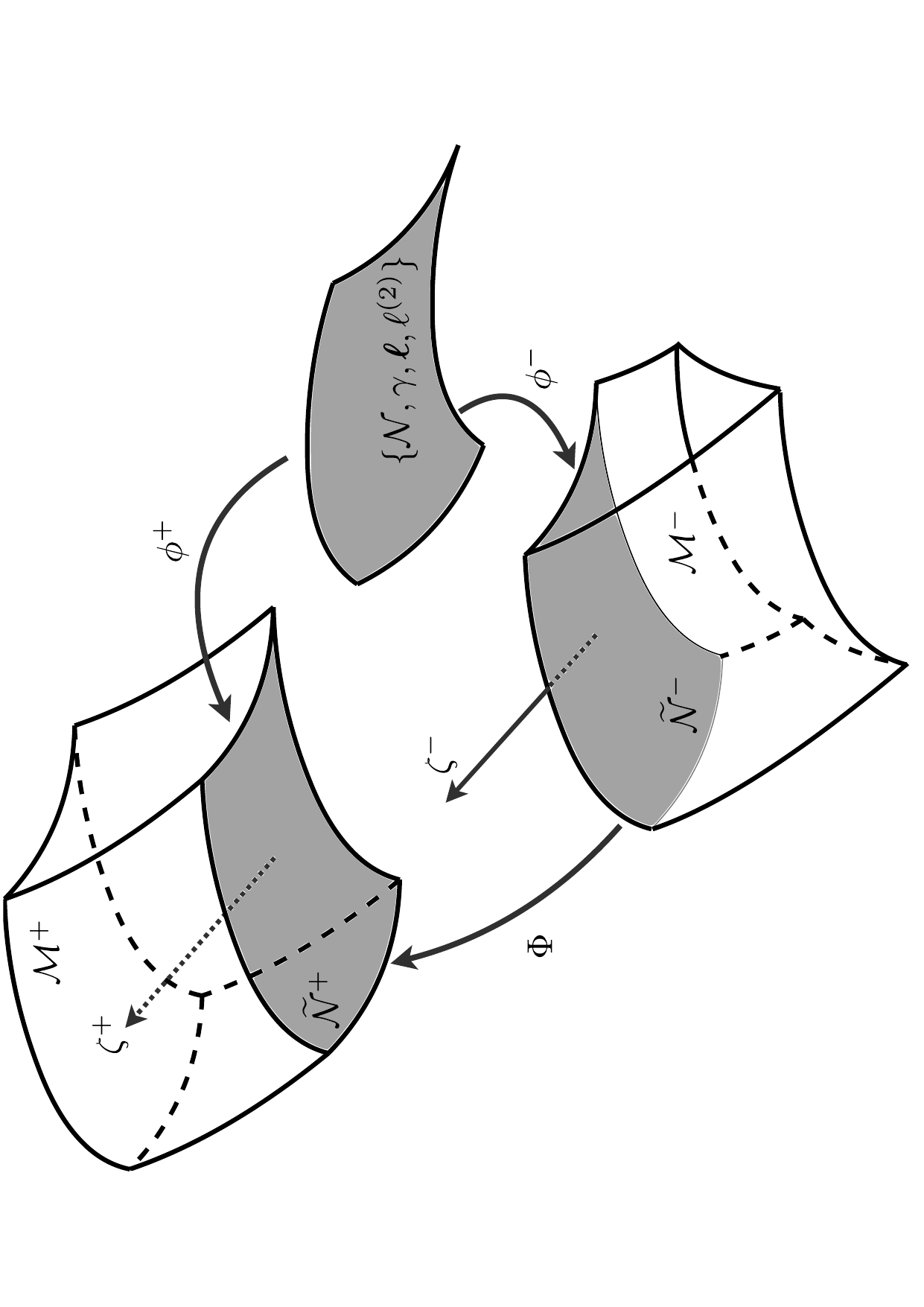}
\caption{Matching of two spacetimes $(\Mpm,g^{\pm})$ with null boundaries $\nullhyp^{\pm}$,
where $\N$ is a detached hypersurface embedded in $(\Mpm,g^{\pm})$ with embeddings $\phi^{\pm}$, $\rig^{\pm}$ are matching riggings and $\Phi$ is the corresponding matching map.}
\label{Fig:1}
\end{figure}

As discussed in \cite{mars2013constraint}, this viewpoint allows to define the energy momentum tensor of the shell in a way \textit{detached} from any ambient spacetime.\ This is essential when the spacetimes to be matched are yet to be constructed. The purely gravitational and the matter content of the shell are encoded by the jump in the extrinsic geometry of $\nullhyp^{\pm}$, namely by a tensor field $[\bY]$ defined by (cf.\ \eqref{eqcf})
\begin{equation}
\label{jump:Y}[\bY]\defi \bY^+-\bY^-,\quad\textup{where}\quad \bY^{\pm}
\defi
\frac{1}{2}(\phi^{\pm})^{\star}\lp\pounds_{\rig^{\pm}}g^{\pm}\rp.
\end{equation}
By adapting $\N$ to one of the boundaries, one can always choose the corresponding embedding freely and so w.l.o.g.\ we fix $\phi^-$. 

Since we are interested in analysing properties of matched spacetimes, it is convenient to write the junction conditions from a spacetime viewpoint as well (see e.g.\ \cite{mars1993geometry}).\ Given two spacetimes $(\Mpm, g^{\pm})$ with null boundaries $\nullhyp^{\pm}$, the matching is possible if and only if there exist two riggings $\rig^{\pm}$ along $\nullhyp^{\pm}$ and a diffeomorphism ${\Phi}:\nullhyp^- \longrightarrow \nullhyp^+$ such that, for all $X,W\in\Gamma(T\nullhyp^-)$,
\begin{align} 
\label{junctcondST:1} g^-(X,W)&=({\Phi}^{\star}g^+)(X,W),\\ 
\label{junctcondST:2} g^-(\rig^-,X)&={\Phi}^{\star}\big(g^+(\rig^+,\cdot)\big) (X),\\ 
\label{junctcondST:3} g^-(\rig^-,\rig^-)&={\Phi}^{\star}(g^+(\rig^+,\rig^+)).
\end{align}
Furthermore, the riggings $\rig^{\pm}$ must fulfil
the orientation condition \hyperlink{junc-cond-FHD}{$(ii)$} above. We then clearly have $\Phi \circ \phi^- = \phi^+$, and we shall refer to $\Phi$, $\phi^{\pm}$ and $\rig^{\pm}$ as \textit{matching map}, \textit{matching embeddings} and \textit{matching riggings} respectively, see also Figure \ref{Fig:1}. 

A key aspect is that, 
if $\rig^{\pm}$ are matching riggings verifying \eqref{junctcondST:1}-\eqref{junctcondST:3}, then any other pair 
\begin{equation}
\nn \Bigl\{\wt{\rig}^-=z(\rig^-+V),\spc\wt{\rig}^+=\widehat{z}(\rig^++\widehat{V})\Bigr\},
\end{equation}
where $ z\in\Fcal^{\star}(\nullhyp^-), V\in\Gamma(T\nullhyp^-)$ and $\widehat{z}\defi ({\Phi}^{-1})^{\star}z,\widehat{V}\defi {\Phi}_{\star}V$, 
fulfils conditions \eqref{junctcondST:1}-\eqref{junctcondST:3} as well. 
This allows one to fix one of the riggings freely and so we w.l.o.g.\ fix $\rig^-$ at our convenience.\ It is also worth stressing that, given a matching map $\Phi$ satisfying \eqref{junctcondST:1} together with a choice of $\rig^-$, \eqref{junctcondST:2}-\eqref{junctcondST:3} provide \textit{at most one} solution for $\rig^+$  \cite[Lem.\ 3]{mars2007lorentzian}.\ The matching is then possible only if such a solution exists and, \textit{in addition}, its orientation fulfils the previous requirement \hyperlink{junc-cond-FHD}{$(ii)$}.\  Thus, 
conditions \eqref{junctcondST:1}-\eqref{junctcondST:3} are necessary but not sufficient to guarantee the matching. Moreover, when the matching is possible, all the information is encoded in $\Phi$ (equivalently in $\phi^+$, once $\phi^-$ has been fixed), since one rigging can be chosen at will and the other is determined uniquely from \eqref{junctcondST:2}-\eqref{junctcondST:3} and \hyperlink{junc-cond-FHD}{$(ii)$}.\

For our purpose it is convenient to adopt the notation and setup of \cite{manzano2021null}, which we introduce next. We assume that $\nullhyp^{\pm}$ have product structure $S^{\pm} \times \mathbb{R}$, where $S^{\pm}$ is a spacelike cross-section of $\nullhyp^{\pm}$ and the null generators of $\nullhyp^{\pm}$ are along $\mathbb{R}$. 
Without loss of generality \cite{manzano2021null}, \textit{we suppose that   
$\nullhyp^-$ lies in the future of its spacetime $\Ml$ and $\nullhyp^+$  
lies in the past of $\Mp$}.\ We also let $\{ L^{\pm}, k^{\pm}, v_I^{\pm} \}$ be two bases  of $\Gamma\lp T\mathcal{M}^{\pm} \rp\vert_{\nullhyp^{\pm}}$ with the following properties (cf.\ Figure \ref{Fig:2}):
\begin{align}
\nn \text{(A)}\quad & k^{\pm} \text{ are future affine}\footnotemark[1] \text{ null generators of } \nullhyp^{\pm}. \nonumber\\[5pt]
\label{cond:ABC} \text{(B)}\quad & \{v_I^{\pm}\} \in \Gamma(T\nullhyp^{\pm}) \text{ satisfy:} \quad v_I^{\pm}\vert_{S^{\pm}} \in \Gamma(TS^{\pm}),  
\quad [k^{\pm},v_I^{\pm}] = 0,\quad [v_I^{\pm},v_J^{\pm}] = 0. \\[5pt]
\nn \text{(C)}\quad & L^{\pm} \text{ are past rigging vector fields verifying}
\textup{:}  
\quad g^{\pm}(L^{\pm},k^{\pm}) = 1, \quad g^{\pm}(L^{\pm},v_I^{\pm}) = 0.
\end{align}
\footnotetext[1]{For null hypersurfaces admitting a cross-section, there always exists an affine null generator \cite{gourgoulhon20063+}.}

\setcounter{footnote}{1}

\vspace{-0.8cm}

\begin{remark}
The conditions $g^{\pm}(L^{\pm},k^{\pm}) = 1$ and $g^{\pm}(L^{\pm},v_I^{\pm}) = 0$ were not imposed in \textup{\cite{manzano2021null,manzano2022general,manzano2023matching}}, and are not necessary for the construction.\ However, it is convenient to enforce them in order to simplify the discussion.\ Observe also that $L^{\pm}$ need not be null.
\end{remark}
To exploit the product structure let $v_{\pm}\in\Fcal(\nullhyp^{\pm})$ be the (unique) solutions of $k^{\pm}(v_{\pm})\vert_{\nullhyp^{\pm}}=1$, $v_{\pm} \vert_{S^{\pm}}=0$. Then $v_{\pm}$ define 
a foliation of $\nullhyp^{\pm}$ by a family of diffeomorphic spacelike cross-sections $\{S^{\pm}_{v_{\pm}}\}$ with induced metric $h^{\pm}$, see Figure \ref{Fig:2}. 
Note that, as a consequence of (B), the vectors $\{v_I^{\pm}\}$ are spacelike everywhere on $\nullhyp^+$, and in fact tangent to the leaves $\{S^{\pm}_{v_{\pm}}\}$.\ 
Hence, 
the components of $h^{\pm}$ in the basis $\{v_A^{\pm}\}$ are given by 
\begin{equation}
h^{\pm}_{AB}\defi g^{\pm}(v_A^{\pm},v_B^{\pm}),
\end{equation}
and we write $h_{\pm}^{AB}$ for the components of its inverse w.r.t.\ the corresponding dual basis.

To formulate the matching most explicitly we write it in terms of two bases $\{e_a^{\pm}\}$ of $\Gamma(T\nullhyp^{\pm})$ that are to be identified via $\Phi$ (i.e.\ $\{\Phi_{\star}e_a^- = e_a^+\}$). 
Suppose that $(\Mpm,g^{\pm})$ are matchable and consider coordinates $\{z^1=v,z^A\}$ on $\N$.\ Define 
vector fields $\{e_a^-\defi \phi^-_{\star}(\cp_{z^a})\}$ (cf.\ Figure \ref{Fig:2}), where $\phi^-:\N\longhookrightarrow\nullhyp^-\subset\Ml$, and enforce the following equalities on $\nullhyp^-$:
\begin{equation}
\label{eiyl}
e^-_1=k^-,\qquad e^-_I=v^-_I,\qquad \rig^-=L^-.
\end{equation}

\begin{figure}[t!]
\begin{minipage}[t]{0.5\textwidth}
  \includegraphics[width=\linewidth]{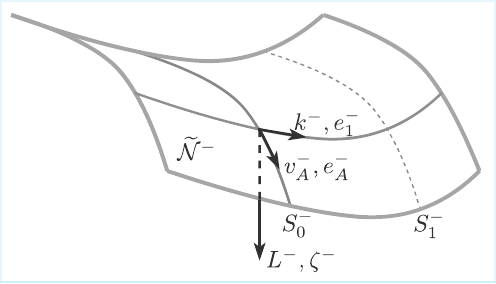}
\end{minipage}
\hfill
\begin{minipage}[t]{0.5\textwidth}
  \vspace{-4.5cm} 
  \includegraphics[width=\linewidth]{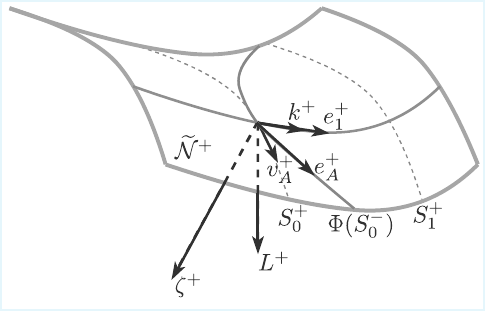}
\end{minipage}
\caption{The bases $\{L^{\pm},k^{\pm},v^{\pm}_A\}$ and $\{\rig^{\pm},e_1^{\pm},e^{\pm}_A\}$ of $\Gamma(T\Mpm)\vert_{\nullhyp^{\pm}}$, the families of spacelike cross-sections $\{S^{\pm}_{v_{\pm}}\}\subset\nullhyp^{\pm}$, and the image $\Phi(S_0^-)\subset\nullhyp^+$ of one such cross-sections via the matching map $\Phi$. In our setup, $\M^-$ lies ``below'' $\nullhyp^-$ and $\M^+$ ``above'' $\nullhyp^+$, hence the matching riggings have suitable orientations.}
\label{Fig:2}
\end{figure}

By the junction conditions, 
there exist $\{\phi^+,\rig^+\}$ so that \eqref{junctcondAbs2} holds and the orientations of $\rig^{\pm}$ satisfy \hyperlink{junc-cond-FHD}{$(ii)$}. Now employing the basis $\{e_a^+\defi \phi^+_{\star}(\cp_{z^a})\}$ of $\Gamma(T\nullhyp^+)$ (see Figure \ref{Fig:2}), determining the matching (i.e., finding $\phi^+$) reduces to deriving explicit expressions for $\{e_a^{+}\}$.\ 
These have been computed explicitly in \cite{manzano2021null} to 
\begin{equation}
\label{eqA4}
e^+_1=\dfrac{\cp H(v,z^A)}{\cp v}k^+,\qquad e^+_I=\dfrac{\cp H(v,z^A)}{\cp z^I}k^++\dfrac{\cp h^J( z^A)}{\cp z^I}v^+_J,
\end{equation}
where $H(v,z^B)$, $h^A(z^B)$ are scalar functions on $\N$ with the following properties:\

\vspace{-0.33cm}

\begin{itemize}\itemsep0cm
\item[$(a)$]  
$H$ satisfies  $\cp_{v}H>0$,  
\item[$(b)$] the Jacobian matrix $\frac{\cp (h^2,...,h^{\n+1})}{\cp(z^2,...,z^{\n+1})}$ has non-zero determinant, and 
\item[$(c)$] the functions $\{h^A\}$ verify 
\begin{align}
	\label{junct1}h^-_{AB}\vert_{\phi^-(p)} &=\frac{\cp h^C}{\cp z^A} \frac{\cp h^D}{\cp z^B}h^+_{CD}\vert_{\phi^+(p)}\quad\forall p\in\N.
\end{align}
\end{itemize}
\begin{remark} 
The combination of \eqref{eiyl} and (A)-(C) implies that $\rig^-$ is past and $v$ is a coordinate along the degenerate direction of $\N$.\ However, 
\eqref{eiyl} still enables to select the pair $\{\phi^-,\rig^-\}$ at will, as the restrictions (A)-(C) allow for some freedom in the choice of $\{L^-,k^-,v^-_I\}$.\ 
\end{remark}
As discussed in \cite{manzano2021null,manzano2023matching}, the core issue for the feasibility of a matching lies precisely in the solvability of 
\eqref{junct1}, which  
is an isometry condition between each cross-section $\{v_-=\textup{const.}\}\subset\nullhyp^-$
and its 
corresponding 
$\Phi$-image on $\nullhyp^+$.\ 
Observe that the identification of $e_1^{\pm}$ by $\Phi_{\star}$, together with \eqref{eiyl}-\eqref{eqA4}, entails that $\Phi$ maps null generators 
of $\nullhyp^-$ to null generators of $\nullhyp^+$.\  
Since $\Phi$ is a diffeomorphism, it follows that the matching requires the existence of a diffeomorphism $\Psi$ between the sets of null generators of $\nullhyp^{\pm}$.\ 

The functions 
$\{H,h^A\}$ fully encode  
the matching embedding $\phi^+$, hence all the matching information  \cite{manzano2021null}.\ 
Indeed, given coordinates $\{v_{+},u^{I}_+\}$ on $\nullhyp^+$, with $\{u_I^+\}$ constant along the generators, 
$\phi^+$ is given by
\begin{equation}
\label{embedPhi+}\phi^+(v,z^A)=\big(v_+=H(v,z^B),u^A_+=h^A(z^B)\big).
\end{equation}
The function $H$, called \textit{step (or jump) function},  
measures the jump across the null direction when crossing the matching hypersurface \cite{manzano2021null}.\ The functions $\{h^A\}$, on the other hand, rule the already mentioned identification $\Psi$ between  
the sets of null generators of $\nullhyp^{\pm}$.\ 

As mentioned before, once the identification of boundary points 
is known, the matching rigging $\rig^+$ can be computed explicitly.\ Its precise expression reads \cite[Cor. 1]{manzano2021null} 
\begin{equation}
\label{uniqxi}
\rig^+=\lp\frac{\cp H}{\cp v}\rp^{-1}\lp 
  L^+-  h_-^{AB} \frac{\cp H}{\cp z^A}
     \lp \frac{1}{2} 
   \frac{\cp H}{\cp z^B}     k^++\frac{\cp h^J}{\cp z^B}v^+_J\rp \rp.
\end{equation}
\begin{remark}
If $L^{\pm}$ are chosen null, an immediate consequence of \eqref{eqA4} and \eqref{uniqxi} is that the bases $\{\rig^+,e_a^+\}$ and $\{L^{+},k^{+},v_{I}^{+}\}$ are related by a Lorentz transformation (see e.g.\ \textup{\cite[Ch.\  2]{griffiths2009exact}}).\ 
\end{remark}
We next present the explicit expressions for the jump $[\bY]$ (cf.\ \eqref{jump:Y})
and the energy-momentum tensor of the shell.\ 
Since the expressions are rather involved in general,  
we present them only in the case when the spacetime boundaries $\nullhyp^{\pm}$ are \textit{totally geodesic}.\  
\begin{proposition}
\label{prop6}
\textup{\cite{manzano2021null}}
Assume that the matching of $(\Mpm,g^{\pm})$ across two totally geodesic null boundaries $\nullhyp^{\pm}$ is ruled by  
the functions $\{H(v,z^A),h^A(z^B)\}$.\ 
Let $h_{IJ}$ be the induced metric on the leaves $\{v=\textup{const.}\}\subset\N$,  $h^{IJ}$ its inverse tensor and $\nabh$ its Levi-Civita derivative.\
Define the vector fields $\{W_I\defi (\cp_{z^I} h^B)v_B^+\}$ on $\nullhyp^+$, and
the tensor fields
\begin{equation}
\nn \bs{\Theta}^L_{\pm}\lp X_{\pm}, Y_{\pm}\rp\defi g^{\pm}\big( \nabla_{X_{\pm}}L,{Y_{\pm}}\big), \quad \bs{\sigma}^{\pm}_L\lp X_{\pm}\rp\defi-g^{\pm}\big( \nabla_{X_{\pm}}k,L\big),\qquad\forall X_{\pm},Y_{\pm}\in \Gamma(TS^{\pm}_{v_{\pm}}).
\end{equation}
Then, the components of the tensor $[\bY]\defi \bY^{+}-\bY^-$ are given by
\begin{align}
\nn 
[\Y_{vv}]
&= - \dfrac{\cp_{ v }\cp_{ v }H}{\cp_{ v }H},\\
\label{Y1J} [\Y_{vz^J}]
&= \bs{\sigma}_{L}^+(  W_J)-\bs{\sigma}^-_{L}( v^-_J)-\dfrac{\cp_{ v }\cp_{z^J}H}{\cp_{ v }H},\\
\nn [\Y_{z^Iz^J}]
&=\frac{1}{\cp_{ v }H}\bigg(2(\nabla_{{\lp I\rd}}^hH)\bs{\sigma}_{L}^+( W_{\ld J\rp})+\bs{\Theta}^{L}_+( W_{\lp I\rd},W_{\ld J\rp})-(\cp_{ v } H)\bs{\Theta}^{L}_-( v_{\lp I \rd}^-,v_{\ld J \rp}^-)-\nabla_{I}^h\nabla_{J}^hH\bigg).
\end{align}
Moreover, the energy-momentum tensor $\tau$ of the shell reads 
\begin{align}
\nn \tau^{vv} &=-\fv h^{IJ}[\bY](\cp_{z^I},\cp_{z^J}),\\ 
\tau^{vz^I}&=\fv h^{IJ}[\bY](\cp_{ v },\cp_{z^J}),\\  
\nn \tau^{z^Iz^J}&=-\fv  h^{IJ}[\bY](\cp_{ v },\cp_{ v }),
\end{align}
where $\fv=1$ (resp.\ $\fv=-1$) if $\rig^-$ points outwards (resp.\ inwards) with respect to $\Ml$.
\end{proposition}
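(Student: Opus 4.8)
The plan is to compute $\bY^{+}$ and $\bY^{-}$ separately from $\bY^{\pm}_{ab}=\tfrac12\big((\phi^{\pm})^{\star}\pounds_{\rig^{\pm}}g^{\pm}\big)(\partial_{z^a},\partial_{z^b})$, rewrite the right-hand side through the Levi-Civita connection as $\bY^{\pm}_{ab}=g^{\pm}\big(\nabla_{e^{\pm}_{(a}}\rig^{\pm},e^{\pm}_{b)}\big)$ (legitimate since only the values of $\rig^{\pm}$ along $\phi^{\pm}(\N)$ intervene), and then subtract. Two preliminary facts streamline everything. First, $[e^{+}_a,e^{+}_b]=\phi^{+}_{\star}[\partial_{z^a},\partial_{z^b}]=0$, so $\nabla_{e^{+}_a}e^{+}_b$ is symmetric in $a,b$. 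Second, contracting \eqref{uniqxi} with $k^{+}$, with $e^{+}_1$ and with $e^{+}_J$, and using (A)--(C), \eqref{eqA4} and $g^{+}(v^{+}_I,k^{+})=0=g^{+}(k^{+},k^{+})$, gives $g^{+}(\rig^{+},k^{+})=(\partial_vH)^{-1}$, $g^{+}(\rig^{+},e^{+}_1)=1$ and $g^{+}(\rig^{+},e^{+}_J)=0$ (these coincide with the corresponding $\rig^{-}$ quantities, as required by \eqref{junctcondAbs2}); consequently, in $g^{+}(\nabla_{e^{+}_a}\rig^{+},e^{+}_b)=e^{+}_a\big(g^{+}(\rig^{+},e^{+}_b)\big)-g^{+}\big(\rig^{+},\nabla_{e^{+}_a}e^{+}_b\big)$ the first term vanishes, leaving $\bY^{+}_{ab}=-g^{+}(\rig^{+},\nabla_{e^{+}_a}e^{+}_b)$.

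For the minus side, everything follows at once from \eqref{eiyl} and (A)--(C): using $g^{-}(L^{-},k^{-})=1$ (constant), $\nabla_{k^{-}}k^{-}=0$, $g^{-}(L^{-},v^{-}_I)=0$ and $[k^{-},v^{-}_I]=0$ one finds $\Y^{-}_{vv}=0$, $\Y^{-}_{vz^J}=\bs{\sigma}^{-}_{L}(v^{-}_J)$ and $\Y^{-}_{z^Iz^J}=\bs{\Theta}^{L}_{-}(v^{-}_{(I},v^{-}_{J)})$, which already supply the $\bY^{-}$-contributions appearing in the statement.

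The core of the proof is $\bY^{+}$. Here I insert \eqref{eqA4} for $e^{+}_a$ and \eqref{uniqxi} for $\rig^{+}$ into $\bY^{+}_{ab}=-g^{+}(\rig^{+},\nabla_{e^{+}_a}e^{+}_b)$, expand $\nabla_{e^{+}_I}e^{+}_J$ by Leibniz into pieces proportional to $k^{+}$, $v^{+}_C$, $\nabla_{v^{+}_C}k^{+}$ and $\nabla_{v^{+}_C}v^{+}_D$, and use that $\nullhyp^{\pm}$ are totally geodesic in the form $g^{+}(\nabla_Xk^{+},Y)=0$ for $X,Y\in\Gamma(T\nullhyp^{+})$ (whence also $\nabla_{k^{+}}k^{+}=0$, and $\nabla_{v^{+}_C}v^{+}_D$ has no $L^{+}$-component): this annihilates every term in which $\rig^{+}$ is paired with a $k^{+}$-derivative through the transverse part of \eqref{uniqxi}. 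What remains organizes into (i) the $\bs{\sigma}^{+}_{L}$- and $\bs{\Theta}^{L}_{+}$-terms, in which the Jacobian factors $\partial_{z^I}h^C$ reconstitute the vectors $W_I=(\partial_{z^I}h^B)v^{+}_B$; (ii) a term $(\partial_vH)^{-1}\partial_{z^I}\partial_{z^J}H$; and (iii) a term built from $\partial_{z^I}\partial_{z^J}h^D$ together with the intrinsic Christoffel symbols of $h^{+}$. Pieces (ii) and (iii) must then be combined: the isometry condition \eqref{junct1} says precisely that $h_{IJ}=(\partial_{z^I}h^C)(\partial_{z^J}h^D)h^{+}_{CD}$ is the pull-back of $h^{+}$ under $z\mapsto h(z)$, so the transformation law of Christoffel symbols under this map identifies (iii) with the term $-(\Gamma^{h})^{K}_{IJ}\partial_{z^K}H$, upgrading the bare second derivative $\partial_{z^I}\partial_{z^J}H$ in (ii) to the covariant Hessian $\nabla^{h}_I\nabla^{h}_JH$. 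Collecting terms and dividing by $g^{+}(\rig^{+},k^{+})=(\partial_vH)^{-1}$ reproduces exactly the stated $\Y^{+}_{vv}$, $\Y^{+}_{vz^J}$, $\Y^{+}_{z^Iz^J}$, and subtracting $\bY^{-}$ yields \eqref{Y1J} and its companion formulas. Finally, $\tau$ follows by inserting these $[\bY]$-components into the general formula for the surface energy-momentum tensor of a null shell in the hypersurface data formalism \cite{mars2013constraint,manzano2021null}: in the present gauge the radical of $\N$ is spanned by $\partial_v$ with $g^{+}(\rig^{+},e^{+}_1)=1$, so the index contractions collapse onto the three displayed components, the overall sign $\fv=\pm1$ recording the orientation of $\rig^{-}$ relative to $\Ml$.

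The main obstacle is the bookkeeping inside $\bY^{+}$: tracking which of the numerous terms in the Leibniz expansion are killed by the totally geodesic condition, and --- the genuinely delicate step --- recognizing that the sum of the $\partial_{z^I}\partial_{z^J}h^D$-contribution and the $h^{+}$-Christoffel contribution is exactly the Christoffel symbol of the pulled-back metric $h_{IJ}$ contracted with $\partial H$, so that $\partial_{z^I}\partial_{z^J}H$ promotes to the tensorial $\nabla^{h}_I\nabla^{h}_JH$. This is where the isometry condition \eqref{junct1} enters in an essential way.
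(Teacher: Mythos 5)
The paper states Proposition \ref{prop6} without proof, citing \cite{manzano2021null}, so there is no in-paper proof to compare against. Your argument is nonetheless correct and is almost certainly the intended one: starting from $\bY^{\pm}_{ab}=g^{\pm}(\nabla_{e^{\pm}_{(a}}\rig^{\pm},e^{\pm}_{b)})$, exploiting that $g^{\pm}(\rig^{\pm},e^{\pm}_b)$ are the constants $1,0,\dots,0$ (forced by the junction conditions \eqref{junctcondAbs2}) and $[e^{\pm}_a,e^{\pm}_b]=0$ to reduce to $-g^{\pm}(\rig^{\pm},\nabla_{e^{\pm}_a}e^{\pm}_b)$, then using totally geodesy to drop the tangential part of $\rig^+$ against $\nabla_{X}k^+$, and finally invoking the isometry condition \eqref{junct1} via the Christoffel transformation law to promote $\partial_{z^I}\partial_{z^J}H$ to $\nabla^h_I\nabla^h_JH$ --- I checked each of these steps and they close up correctly, including the reconstitution of the $W_I$ factors and the agreement of $\tau$ with \eqref{YpmMk:and:tauMk}-\eqref{def:eg:flux:pressure} once the gauge $\fv=-1$ of Section \ref{sec:intro:MK} is inserted.
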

\begin{remark}
If the vector fields $L^{\pm}$ are chosen null, $\bs{\Theta}_{\pm}^{L}$, $\bs{\sigma}^{\pm}_L$ coincide with the transverse second fundamental form and the torsion one-form of the leaves $\{v_{\pm}=\textup{const.}\}\subset\nullhyp^{\pm}$.\
\end{remark}

\section{Matching two Minkowski spaces across a null hyperplane}\label{sec:intro:MK}

From now on, we focus our analysis on the 
matching of two regions of Minkowski space across a null hyperplane.\ In particular, in this section we shall 
prove the feasibility of the matching, provide explicit expressions for $[\bY]$ and the 
energy-momentum tensor $\tau$, and examine the explicit form of the functions $\{H, h^A\}$ depending on the type of 
shell.\ We refer to   \cite{manzano2021null,manzano2023matching} for further details.

Consider two $(\n+1)$-dimensional Minkowski spaces $(\Mkpm,\flatpm)$, with metrics
\begin{align}
	\label{Mk:spacetimes}	\flatpm=-2d\cu_{\pm} d \cv_{\pm}+\delta_{AB} dx_{\pm}^A dx_{\pm}^{B},\qquad 
	\cu_+\geq0, \quad \cu_-\leq0, 
\end{align}
and boundaries 
$\nullhyp^{\pm} \defi \{ \cu_{\pm} =0\}$.\ 
The hypersurfaces $\nullhyp_{\pm}$ are null, admit a foliation by spacelike cross-sections and are totally geodesic (because the first fundamental forms of $\nullhyp^{\pm}$ are Lie-constant along the null generators).\  
Therefore, 
the results of Section \ref{sec:matching:2021}, and in particular of Proposition \ref{prop6}, apply.\ 
We let $S^{\pm}\defi \{\cu_{\pm}=0, \cv_{\pm}=0\}$, and make the following choice for the bases $\{L^{\pm},k^{\pm},v_I^{\pm}\}$ of $\Gamma(T\Mkpm)\vert_{\nullhyp^{\pm}}$:
\begin{equation}
	\label{basis:L,k,v:Mk} 
	L^{\pm}=-\cp_{\cu_{\pm}},\qquad k^{\pm}=\cp_{\cv_{\pm}},\qquad v^{\pm}_I=\cp_{x^I_{\pm}}.
\end{equation} 
The foliation functions on $\nullhyp^{\pm}$ are then $\cv_{\pm}$, and a direct calculation \cite{manzano2021null}  
yields that the tensor fields $\bs{\sigma}_{L}^{\pm}$ and $\bs{\Theta}^{L}_{\pm}$ (cf.\ Proposition \ref{prop6}) vanish everywhere on $\nullhyp^{\pm}$.\ 

To construct metric hypersurface data that is embedded in both $\Mkp$ and $\Mkm$, 
we consider an embedding $\phi^-:\N\longhookrightarrow\nullhyp^-\subset\Mkm$ of the form
\begin{align}
	\label{phi:minus:Mk} 
	\phi^-(v, z^I) =\left(\cu_-=0,\cv_-= v, x_-^I=z^I \right ).
\end{align}
It then follows that the metric induced on the leaves $\{v=\text{const.}\}\subset\N$ is the flat metric, i.e.\ $h_{AB}=\delta_{AB}$.\ 
As mentioned in Section \ref{sec:matching:2021}, the viability of the matching relies on the solvability of 
\eqref{junct1}, which  
in the present case reads
\begin{align}
	\label{jcMkcase}
	\delta_{IJ}\vert_{\phi^-(p)} = \frac{\partial h^L}{\partial z^I}\ \frac{\partial h^K}{\partial z^J}\ \delta_{LK}\vert_{\phi^+(p)}\qquad\forall p\in \N, 
\end{align}
where the matching embedding $\phi^+:\N\longhookrightarrow\nullhyp^+\subset\Mkp$
is given by \eqref{embedPhi+}, i.e. 
\begin{align}
\label{phi:plus:Mk}
\phi^+(v, z^I) =\left(\cu_{+} =0, \cv_{+} = H(v, z^I), x_{+}^I = h^I (z^J) \right ).
\end{align}
Equation \eqref{jcMkcase}
is an isometry condition between the cross-sections 
$\phi^{\pm}(\{v=\textup{const.}\})\subset\N^{\pm}$.\  
In particular, 
it forces  $\phi^{\pm}(\{v=\textup{const.}\})$ 
to be Euclidean planes, whose isometries are translations and rotations. Now, 
given a foliation $\phi^{-}(\{v=\textup{const.}\})$ of $\nullhyp^-$, the symmetry properties of 
$(\Mkp,\flatp)$ allow one to perform the necessary combinations of rotations and translations so that 
$\{\cu_+,\cv_+,x_+^I\}$ 
verify $\cp_{z^I}x_+^J\vert_{\nullhyp^+}=\cp_{z^I}h^J=\delta_I^J$.\ 
Thus,  there always exists an isometry of $\Mkp$ which turns \eqref{jcMkcase} into a trivial equation, which guarantees its solvability.\ 
The previous reasoning enables to set $h^A=z^A$ with full generality, and 
it is then straightforward to prove that  
$$\Big\{\N,\spc\gamma=\delta_{AB}dz^A\otimes dz^B,\spc\ellc=dv,\spc \elltwo=0\Big\}$$ defines null metric hypersurface data which  
is embedded in $\Mkm$ with embedding $\phi^-$ (cf.\ \eqref{phi:minus:Mk}) and rigging $\rig^-=L^-$, as well as embedded in $\Mkp$ with embedding (recall \eqref{phi:plus:Mk}) $$\phi^+(v, z^I) =\left(\cu_{+} =0, \cv_{+} = H(v, z^I), x_{+}^I =  z^I \right )$$  and rigging  
(cf.\ \eqref{uniqxi})
\begin{equation}
	\label{uniqxi:Mk}
\rig^+=-\lp\frac{\cp H}{\cp v}\rp^{-1}\lp 
\cp_{\cu_{+}}	+  \delta^{AB} \frac{\cp H}{\cp z^A}
	\lp \frac{1}{2} 
	\frac{\cp H}{\cp z^B}     \cp_{\cv_{+}}+\cp_{x^B_{+}}\rp \rp.
\end{equation}
Observe that the combination of \eqref{eiyl}, \eqref{Mk:spacetimes}, and the choice \eqref{basis:L,k,v:Mk} of $L^-$ 
implies that $\rig^-$ points inwards 
(because $L^-$ is past), whereas $\rig^+$ points outwards (recall  
that $\cp_vH>0$).\ Consequently, the riggings $\rig^{\pm}$
satisfy the orientation condition  \hyperlink{junc-cond-FHD}{$(ii)$}, and hence the matching of $(\Mkpm,\eta^{\pm})$ across $\nullhyp^{\pm}$ is feasible.\ 
In fact, since $\nullhyp^{\pm}$ are totally geodesic,  an infinite number of matchings can be performed (see the discussion in \cite{manzano2022general,manzano2023matching}).\

The jump $[\bY]$ and the energy-momentum tensor $\tau$ are obtained by particularizing Proposition \ref{prop6} 
to the present case.\  
Specifically, 
\begin{align}
	\label{YpmMk:and:tauMk}
	[\Y_{ab}]&=-\dfrac{\cp_{z^a}\cp_{z^b}H}{\cp_{ v }H},\qquad 
	\tau^{vv}
	=\rho,  \qquad \tau^{vz^A}=j^A , \qquad \tau^{z^Az^B}=\delta^{AB}p,
\end{align}
where 
\begin{align}
	\label{def:eg:flux:pressure}  
	\rho&= -\delta^{AB}\dfrac{\cp_{z^A}\cp_{z^B}H}{\cp_{ v }H}, \qquad j^A= \delta^{AB}\dfrac{\cp_{v}\cp_{z^B}H}{\cp_{ v }H}, \qquad  p= -\dfrac{\cp_{v}\cp_{v}H}{\cp_{ v }H}  
\end{align}
are the energy density, the (components of the) energy flux, and the pressure of the shell respectively \cite{manzano2023matching,poisson2004relativist}.\ 
Observe that in the present case the matching information is entirely codified by the jump function $H$, since 
$\{h^A=z^A\}$.\ This is consistent with the fact that 
$[\bY]$ and $\tau$ depend solely on $H$.\ Notice also that no restrictions have been imposed on $H$ 
apart from $\cp_v H > 0$.\ It is hence worth discussing the types of null shells that can arise depending on the specific form of $H$ 
\cite{manzano2021null}: 
\begin{itemize}\itemsep0cm
\item[\textbf{1.}] \textbf{No-shell case.}\ This situation occurs when $[\bY]=0$, and corresponds to 
a jump function of the form  $H = a v + b_J z^J + c$, where $a, b_J, c$ are real numbers and $a > 0$.\ 
While $H$ is not completely trivial, 
there always exists 
an isometry of 
$(\Mkp,\eta^+)$ which transforms it 
into $H = v$, which is of course  consistent with the absence of a shell.\
\item[\textbf{2.}] \textbf{Purely gravitational or null-dust shell.}\
Purely gravitational shells are characterized by $[\bY]\neq0$,  $\tau=0$, while null-dust shells satisfy $\rho\neq0$, $j^A=p=0$.\ In both cases, the jump function is of the form $H = a v + \mathcal{H}(z^A)$ with $a \in\mathbb{R}_{> 0}$ and $\mathcal{H}\in\Fcal(\N)$ is a scalar function on $\N$ which is Lie-constant along the generators. The type of shell is determined by $\mathcal{H}$, which dictates whether the energy density vanishes identically or not.
\item[\textbf{3.}] \textbf{Generic shell.}\ For a \textit{generic} matching of two regions of Minkowski space across a null hyperplane, the jump function reads 
\begin{equation}
	\label{MkHpress}
	H(v,z^A)=\beta( z^A)\int\exp\lp -\int p(v,z^A) dv\rp dv+\mathcal{H}( z^A),
\end{equation}
where $p$ is the pressure of the shell, and $\beta(z^A),\mathcal{H}(z^A)$ are 
scalar functions on $\N$, Lie-constant along its generators.
The functions $\beta(z^A)$ and $p(v,z^A)$ must be compatible with the condition $\cp_vH>0$. 
\end{itemize}

\begin{figure}[t!]
\centering
\includegraphics[angle=-90,width=15cm]{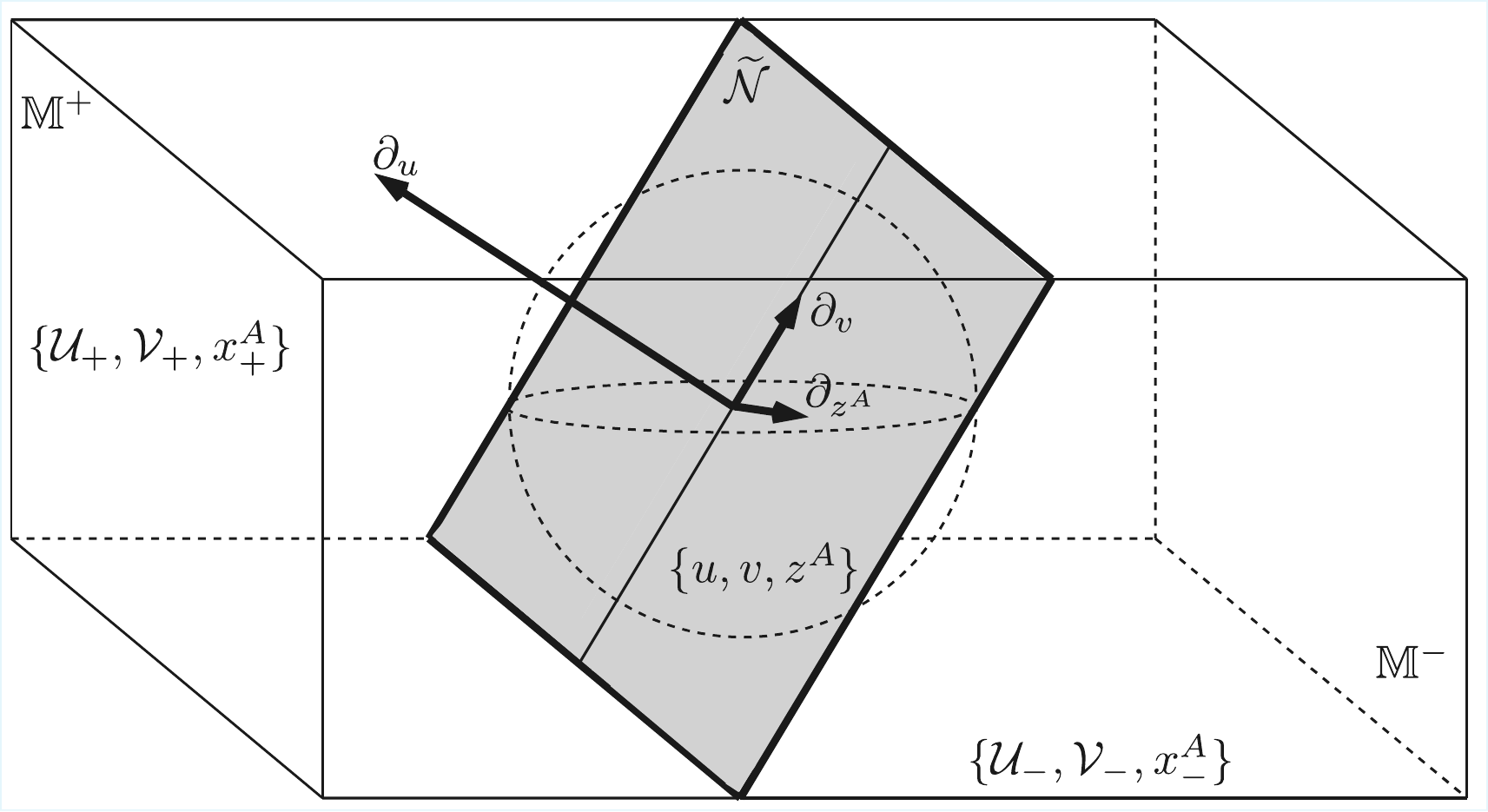}
\caption{The spacetime $(\M,g)$ arising from the most general matching of two regions of Minkowski space across a null hyperplane.\ The coordinates $\{\cu_{\pm},\cv_{\pm},x_{\pm}^A\}$ are defined on the regions $\Mkpm\cap\M$ respectively, while the coordinates $\{u,v,z^A\}$ are constructed on a (two-sided) neighbourhood of the matching hypersurface $\nullhyp\subset\M$.}
\label{Fig:3}
\end{figure}

\section{Lipschitz continuous metric form in the Minkowski case}\label{sec:lip:metric}

In view of the results of Section \ref{sec:intro:MK}, the following natural questions arise 
concerning the metric $g$ of the spacetime $\M$ constructed there from the most general matching of two Minkowski spaces $(\Mkpm,\flatpm)$ across a null hyperplane:
\begin{multicols}{2}
\begin{itemize}\itemsep0cm
    \item[(1)] What is the regularity of $g$? 
    \item[(2)] Can we find an explicit form of $g$?
\end{itemize}
\end{multicols}
The general expectation concerning (1) is that $g$ should be at least continuous (as predicted by a result in \cite{clarke1987junction}), but we prove that it is in fact locally Lipschitz continuous\footnote{Observe that, in the last couple of years, it has become clear that local Lipschitz continuity is the threshold regularity necessary for a proper Lorentzian causality theory, cf.\ \cite{CG:12,GKSS:20}.}. Indeed, in Appendix \ref{app:B} we even show a corresponding result for the matching of general semi-Riemannian manifolds across an arbitrary hypersurface. 

In this section we will address (2) 
by first constructing a coordinate system in a neighbourhood of the matching hyperplane and then deriving an explicit Lipschitz continuous expression for $g$ which also encodes 
the physical properties of the shell in a transparent way.

Let $(\M,g)$ be the spacetime resulting from the matching of $(\Mkpm, \flatpm)$ across $\nullhyp^{\pm}$ with jump function \eqref{MkHpress}, and $\nullhyp\subset\M$ the corresponding matching hypersurface (see Figure \ref{Fig:3}).\ Then $\M\equiv(\Mkp\cup \Mkm)/ \nullhyp$ (or $\M\equiv\Mkp \cup_{\Phi} \Mkm$ in the notation of Appendix \ref{app:B}), where the quotient means that we are identifying the boundaries $\nullhyp^{\pm}$ and this identification gives rise to a single null hypersurface ($\nullhyp$) where 
the null shell is located. In the coordinate systems  $\{\cu_{\pm},\cv_{\pm},x_{\pm}^A\}$ for $\lp\Mkpm,\flatpm\rp$ used above,  $\nullhyp$ is given by $\nullhyp\equiv\{\cu_{\pm}=0\}$. 

Our aim now is to build a \textit{new} coordinate system $\{u,\hatv,\hatz^A\}$
in a neighbourhood $\mathcal{O}\subset\mathcal{M}$ of $\nullhyp$ and to explicitly derive metric there. We start by enforcing the following trivial identification between the coordinate systems 
$\{u,\hatv,\hatz^A\}$ and $\{\cu_{-},\cv_{-},x_{-}^A\}$ on $\Mkm$:
\begin{equation}
\label{minus:coords:uvz:coords}
\ld \Bigl\{\cu_-=u,\spc\cv_-=\hatv,\spc x_-^A=\hatz^A\Bigr\}\rv_{\Mkm}. 
\end{equation}
Then \eqref{eiyl} together with the choice \eqref{basis:L,k,v:Mk} force the vector fields $\{\rig^-,e_a^-\}$ to be given by 
\begin{equation}
\label{minussideVF}
e_1^-=\cp_{\hatv},\qquad e_I^-=\cp_{\hatz^I},\qquad\rig^-=-\cp_{u},
\end{equation} 
in the basis $\{\cp_{u},\cp_{\hatv},\cp_{\hatz^A}\}$ 
of $\Gamma\big( T\mathcal{M}\big)\vert_{\nullhyp}$.\ 
The bases $\{e_a^{\pm},\rig^{\pm}\}$ are identified in the matching process, so the rigging $\rig^+$ and the vector fields $\{e^+_a\}$ must verify (recall \eqref{eqA4},  \eqref{uniqxi:Mk} and the choice $h^A(z^B)=z^A$)
\begin{align}
	\label{plussideVF1a}
	e_1^+&=(\cp_{ v }H) \cp_{\cv_+}=\cp_{\hatv},\quad\qquad e_I^+=(\cp_{z^I}H) \cp_{\cv_+}+\cp_{x_+^I}=\cp_{\hatz^I},\\
	\label{plussideVF1b}\rig^+&= -\frac{1}{\partial_{ v } H} \left (
	\cp_{\cu_+} +  \delta^{AB} (\partial_{z^A} H)  \left ( \frac{1}{2}  (\partial_{z^B} H )  \cp_{\cv_+}
	+ \cp_{x_+^B} \right ) \right )=-\cp_{u}.
\end{align}
Since only tangential derivatives of  
$H$ appear on \eqref{plussideVF1a}-\eqref{plussideVF1b}, and 
\begin{equation}
\nn \Bigl\{ v=\cv_-=\hatv,\spc z^A=x_-^A=\hatz^A\Bigr\}\text{ on $\N$ (cf.\ \eqref{phi:minus:Mk}),}
\end{equation}
from now we drop the hat from the coordinates 
$\{\hatv,\hatz^A\}$
and simply write $\{u,v,z^A\}$.\ This, in particular, enables to rewrite  
\eqref{plussideVF1a}-\eqref{plussideVF1b} as
\begin{align}
\label{plussideVF2a}
e_1^+&=(\cp_{v}H) \cp_{\cv_+}=\cp_v,\quad\qquad e_I^+=(\cp_{z^I}H) \cp_{\cv_+}+\cp_{x_+^I}=\cp_{z^I},\\
\label{plussideVF2b}\rig^+&=  -\frac{1}{\partial_{v} H} \left (
\cp_{\cu_+} +  \delta^{AB} (\partial_{z^A} H)  \left ( \frac{1}{2}  (\partial_{z^B} H )  \cp_{\cv_+}
+ \cp_{x_+^B} \right ) \right )=-\cp_{u}.
\end{align}
From \eqref{plussideVF2a}-\eqref{plussideVF2b}, we shall  
obtain an explicit relation between the coordinates $\{\cu_+,\cv_+,x_+^A\}$ and $\{u,v,z^A\}$ as follows.\  
Let $y_+$ denote any of $\cu_+$, $\cv_+$ or $x_+^A$, and $\nabla^+$ be the Levi-Civita covariant derivative of $\flatp$.\ First, we notice that 
the one-forms $d\cu_+$, $d\cv_+$ and $dx^A_+$ are covariantly constant on $\Mkp\subset\M$, since   
\begin{equation}
\nabla^+_{\alpha}(dy_+)_{\beta}=\nabla^+_{\alpha}\nabla^+_{\beta}y_+=\cp_{\alpha}\cp_{\beta}y_+=0.
\end{equation}
Secondly, we enforce that the vector field $\xi\defi \cp_u$ is null and affinely geodesic everywhere on $\Mkp$.\ This implies 
\begin{align}
\nn &\xi\big(\xi(y_+)\big)= \xi^{\alpha}\nabla^+_{\alpha}\big(  \xi^{\beta}\nabla^+_{\beta}y_+\big)=\big( \xi^{\alpha}\nabla^+_{\alpha} \xi^{\beta}\big)\big( \nabla^+_{\beta}y_+\big)+ \xi^{\alpha} \xi^{\beta}\nabla^+_{\alpha} \nabla^+_{\beta}y_+=0\\
\label{second:Der:coords}&\Longleftrightarrow\quad y_+=a+ub,
\end{align}
where $a\lp v,z^A\rp$ and $b\lp v,z^A\rp$ are scalar functions on 
$\Mkp$ with no dependence on $u$.\ The coordinate transformation on the region $\Mkp$ of $\M$ must therefore be of the form 
\begin{equation}
\label{coord:transf:general}\cu_+=\cu_0+u\hspace{0.075cm}\cu_1,\qquad\cv_+=\cv_0+u\cv_1,\qquad x^A_+=x^A_0+u x^A_1,
\end{equation}
where $\cu_0,\cu_1,\cv_0,\cv_1,x^A_0,x^A_1$ are scalar functions on $\Mkp$ that only depend on $\{v,z^A\}$.\ 
Finally, we use the relations \eqref{phi:plus:Mk} and \eqref{plussideVF2b} to determine $\cu_0,\cu_1,\cv_0,\cv_1,x^A_0,x^A_1$ explicitly.\ This requires that we extend scalar functions on $\N$ to functions on $\Mkp\subset\M$, which we accomplish by requiring that they are independent of $u$.\ 
Specifically, the trivial identification of coordinates on $\Mkm\subset\M$ entails that $\nullhyp\equiv\{u=0\}$.\ This, together with $h^A=z^A$ and \eqref{phi:plus:Mk}, forces $\cu_0=0$, $\cv_0=H\lp v,z^A\rp$, $x^A_0=z^A$.\ Therefore, 
\begin{align}
\label{eq1a:0}&\cu_+=u\hspace{0.075cm}\cu_1,&&\cv_+=H+u\cv_1,&& x^A_+=z^A+u x^A_1,&\\
\label{eq1a} &d\cu_+=\cu_1du+ud\cu_1,&&d\cv_+=dH+\cv_1du+ud\cv_1,&& dx^A_+=dz^A+ x^A_1du+u dx^A_1.&
\end{align}
The scalar functions $\cu_1$, $\cv_1$, $x^A_1$ can be derived from \eqref{plussideVF2b} by decomposing $\cp_u$ as
\begin{equation}
\label{eq:cp_u:UVX}\cp_u=\frac{\cp \cu_+}{\cp u}\cp_{\cu_+}+\frac{\cp \cv_+}{\cp u}\cp_{\cv_+}+\frac{\cp x^A_+}{\cp u}\cp_{x^A_+}=\cu_1\cp_{\cu_+}+\cv_1\cp_{\cv_+}+x_1^A\cp_{x^A_+}.
\end{equation}
Indeed, inserting \eqref{eq:cp_u:UVX} into \eqref{plussideVF2b}, one obtains (recall that $\cp_vH>0$)
\begin{align}
\label{u1v1x1}&\cu_1=\dfrac{1}{\cp_vH},\qquad\cv_1=M\cu_1,\qquad x^A_1=\qfunction^A\cu_1\\
\nn &\textup{where}\qquad M\defi \dfrac{1}{2}\delta^{AB}\cp_{z^A}H\cp_{z^B}H,\quad\qfunction^A\defi \delta^{AB}\cp_{z^B}H.
\end{align}
Observe that the quantities $\qfunction^A$ and $\cu_1$ verify
\begin{align}
\label{id:for:dqA:and:dU1:a} d\qfunction^A&=\delta^{AB}\lp \cp_{v}\cp_{z^B}Hdv+\cp_{z^C}\cp_{z^B}Hdz^C\rp,\\
\label{id:for:dqA:and:dU1:b} d\cu_1&=-\cu_1^2\lp \cp_{v}\cp_{v}Hdv+\cp_{z^B}\cp_{v}Hdz^B\rp.
\end{align}
Let us now derive the explicit expressions of the flat metrics $\flatpm$ in the coordinates $\{u,v,z^A\}$.\ The calculation for $\flatm$ is immediate because of the trivial relation between $\{u,v,z^A\}$ and $\{\cu_-,\cv_-,x^A_-\}$ on $\Mkm$ (cf.\ \eqref{minus:coords:uvz:coords}), which in particular entails
\begin{align}
\label{one-forms:minus:side}d\cu_-\stackbin{\Mkm}= du,\quad d\cv_-\stackbin{\Mkm}=dv,\quad dx_-^A\stackbin{\Mkm}=dz^A,
\end{align} 
and hence 
\begin{align}
\label{flatm:uvz}\flatm=-2dudv+\delta_{AB}dz^Adz^B.
\end{align}
On the other hand, the results \eqref{eq1a:0}-\eqref{eq1a} and \eqref{u1v1x1} allow us to rewrite $\flatp$ as follows:\ 
\begin{align}
\nonumber \flatp=&-2d\cu_+d\cv_++\delta_{AB}dx_+^Adx_+^B\\
\nonumber =&-2\lp \cu_1du+ud\cu_1\rp \lp dH+\cv_1du+ud\cv_1\rp\\
\nonumber &+\delta_{AB}\lp dz^A+ x^A_1du+u dx^A_1\rp \lp dz^B+ x^B_1du+u dx^B_1\rp\\
\nonumber =&\spc du^2\lp -2\cu_1\cv_1+\delta_{AB}x^A_1x^B_1\rp\\
\nonumber &+2du\Big( u\lp -\cv_1d\cu_1-\cu_1d\cv_1+\delta_{AB}x_1^Bdx_1^A\rp-\cu_1dH+\delta_{AB}x_1^Adz^B\Big)\\
\label{mas:Eqs:yoyanosequeponer} &-2ud\cu_1dH-2u^2d\cu_1d\cv_1+\delta_{AB}\lp dz^A+udx_1^A\rp \lp  dz^B+udx_1^B\rp.
\end{align}
Now, the vector field $\cp_u$ being null everywhere forces (this also follows directly from \eqref{u1v1x1}) 
\begin{equation}
\label{otraeq.whatever.name}-2\cu_1\cv_1+\delta_{AB}x^A_1x^B_1=0\qquad\Longrightarrow\qquad0=-\cv_1d\cu_1-\cu_1d\cv_1+\delta_{AB}x^A_1dx^B_1.
\end{equation}
Besides, since our choice of rigging $\rig^-$ is orthogonal to the spacelike sections $\{\cv_{-}=\text{const.}\}$ of $\nullhyp^-$, 
we know by \eqref{minus:coords:uvz:coords}-\eqref{minussideVF} that $\cp_u$ is orthogonal to the spacelike sections of $\{v=\text{const.}\}$ of $\N$ as well.\ In particular, this implies that (again this can also be obtained from \eqref{u1v1x1} via a straightforward calculation)
\begin{equation}
\label{ffifsjodiajH432RY8}-\cu_1\cp_{z^A}H+\delta_{AB}x^B_1=0\qquad\Longleftrightarrow\qquad-\cp_{z^A}H+\delta_{AB}\delta^{BJ}\cp_{z^J}H=0.
\end{equation}
Since $dH=\cp_vHdv+\cp_{z^A}Hdz^A$, it follows that 
\begin{align}
\nn -\cu_1 dH+\delta_{AB}x_1^Adz^B&= \cu_1 (- dH+\delta_{AB}\qfunction^Adz^B)\\
\label{arriba} &= \cu_1(- \cp_vHdv-\cp_{z^A}Hdz^A+\delta_{AB}\delta^{BJ}\cp_{z^J}Hdz^A)=-dv,
\end{align}
where we have used \eqref{ffifsjodiajH432RY8} to cancel the last to terms and \eqref{u1v1x1} to introduce the coefficient $\cu_1$. Inserting \eqref{otraeq.whatever.name} and \eqref{arriba} into \eqref{mas:Eqs:yoyanosequeponer} yields
\begin{align}
\nonumber \flatp=&-2dudv-2ud\cu_1dH-2u^2d\cu_1d\cv_1+\delta_{AB}\lp dz^A+udx_1^A\rp \lp  dz^B+udx_1^B\rp\\
\nonumber =&-2dudv +\delta_{AB}dz^Adz^B-2u\lp d\cu_1dH-\delta_{AB}dz^Adx_1^B\rp
-u^2\lp 2d\cu_1d\cv_1-\delta_{AB}dx_1^Adx_1^B\rp\\
\nonumber =&-2dudv +\delta_{AB}dz^Adz^B-2u\Big( d\cu_1dH-\delta_{AB}dz^A\lp \cu_1d\qfunction^B+\qfunction^Bd\cu_1\rp\Big)\\
\nonumber &-u^2\Big( 2d\cu_1\lp Md\cu_1+\cu_1dM\rp-\delta_{AB}\lp \cu_1d\qfunction^A+\qfunction^Ad\cu_1\rp\lp \cu_1d\qfunction^B+\qfunction^Bd\cu_1\rp\Big),
\end{align}
which upon using \eqref{arriba} as well as the identities  $2M-\delta_{AB}\qfunction^A\qfunction^B=0$, $dM-\delta_{AB}\qfunction^Ad\qfunction^B=0$ that follow from \eqref{u1v1x1}, gives 
\begin{align}
\nonumber \flatp=&-2dudv +\delta_{AB}dz^Adz^B+2u\lp -\frac{1}{\cu_1}dvd\cu_1+\cu_1\delta_{AB}dz^A d\qfunction^B\rp\\
\nonumber &+u^2\lp d\cu_1\lp -2\cu_1dM+2\cu_1\delta_{AB}\qfunction^Ad\qfunction^B\rp+\cu_1^2\delta_{AB}d\qfunction^Ad\qfunction^B\rp\\
\nonumber =&-2dudv +\delta_{AB}dz^Adz^B+2u\lp -\frac{1}{\cu_1}dvd\cu_1+\cu_1\delta_{AB}dz^A d\qfunction^B\rp+u^2\cu_1^2\delta_{AB}d\qfunction^Ad\qfunction^B.
\end{align}
Finally, inserting \eqref{id:for:dqA:and:dU1:a}-\eqref{id:for:dqA:and:dU1:b} and using \eqref{YpmMk:and:tauMk} and the expression in \eqref{def:eg:flux:pressure} for the pressure, it is straightforward to check that in the coordinates $\{u,v,z^A\}$ the metric $\flatp$ reads 
\begin{align}
\nonumber \flatp=&-2dudv +\delta_{AB}dz^Adz^B+udv^2\lp u\delta^{AB}[\Y_{vz^A}][\Y_{vz^B}]-2p\rp\\
\nn  &+2u[\Y_{vz^I}]dvdz^A\lp -2\delta^I_A+u\delta^{BI}[\Y_{z^Az^B}]\rp\\
\label{eq5} &-2udz^Adz^B\lp [\Y_{z^Az^B}]-\frac{u}{2}\delta^{IJ}[\Y_{z^Iz^A}][\Y_{z^Jz^B}]\rp.
\end{align}
The coordinates $\{u,v,z^A\}$ are no longer Minkowskian on $\Mkp$.\ Instead, $\flatp$ contains second derivatives of the jump function $H(v,z^A)$ (cf.\  \eqref{YpmMk:and:tauMk}-\eqref{def:eg:flux:pressure}).\ Note that, in the no-shell case (see Section \ref{sec:intro:MK}) where the jump function is linear, $\{u,v,z^A\}$ become Minkowskian and one recovers the global Minkowski space.\ This represents a non-trivial consistency check for \eqref{eq5}.\

From \eqref{flatm:uvz} and \eqref{eq5}, we now see that $\flatm$ and $\flatp$ agree on $\nullhyp=\{u=0\}$ and can be written in the following explicit Lipschitz continuous form in terms of $\{u,v,z^A\}$ 
as 
\begin{align}
\nn g=&-2dudv +\delta_{AB}dz^Adz^B+u_+( u)dv^2\lp u\delta^{AB}[\Y_{vz^A}][\Y_{vz^B}]-2p\rp\\
\nn  &+2u_+( u)[\Y_{vz^I}]dvdz^A\lp u\delta^{BI}[\Y_{z^Az^B}]-2\delta^I_A\rp\\
\label{c0metricprevious:}&-2u_+( u)dz^Adz^B\lp [\Y_{z^Az^B}]-\frac{u}{2}\delta^{IJ}[\Y_{z^Iz^A}][\Y_{z^Jz^B}]\rp,
\end{align}
where $$u_+\lp u\rp\defi\lb\begin{array}{ll}u & u\geq0\\ 0 & u<0\end{array}\rd$$ is the so-called kink function.\ We can summarize the previous results as follows. 

\begin{theorem}\label{theorem:C0metric:Mk} 
Let $\lp\mathcal{M},g\rp$ be the spacetime resulting from the most general matching of two regions $\lp\Mkpm,\flatpm\rp$ of Minkowski space across a null hyperplane (cf.\ \eqref{MkHpress}).\ Denote by $\nullhyp$ the matching hypersurface.\ 
Then, there exists coordinates $\{u,v,z^A\}$ on a neighbourhood $\mathcal{O}\subset\M$ of $\nullhyp$ with the following properties:
\begin{itemize}\itemsep0cm
	\item[(i)] $\{u,v,z^A\}$ are Gaussian null coordinates\footnote{See e.g.\ \cite{kunduri2013classification, moncrief1983symmetries} 
	for details on the construction of Gaussian null coordinates.} 
	on both sides of $\nullhyp$, such that 	$\cp_{v}\vert_{\nullhyp}$ is a 
	null generator of $\nullhyp$, and 	$\partial_u\vert_{\nullhyp}$
	is a rigging of $\nullhyp$ with the properties of being future-directed, null, orthogonal to 
		the spacelike planes $\nullhyp \cap \{v = \textup{const.}\}$ and satisfying
		$g (\partial_u, \partial_{v} )\vert_{\nullhyp} = -1$.
		\item[(ii)] In the coordinates $\{u,v,z^A\}$, the metric $g\vert_{\mathcal{O}}$ takes the Lipschitz continuous form 
		\begin{align}
			\nonumber g\vert_{\mathcal{O}}=&-2dudv +\delta_{AB}dz^Adz^B+u_+( u)dv^2\lp u\delta^{AB}[\Y_{vz^A}][\Y_{vz^B}]-2p\rp\\
			\nonumber  &+2u_+( u)[\Y_{vz^I}]dvdz^A\lp u\delta^{BI}[\Y_{z^Az^B}]-2\delta^I_A\rp\\
			\label{c0metricprevious}&-2u_+( u)dz^Adz^B\lp [\Y_{z^Az^B}]-\frac{u}{2}\delta^{IJ}[\Y_{z^Iz^A}][\Y_{z^Jz^B}]\rp,
		\end{align}
		where $u_+\lp u\rp$ is the kink function.
\item[$(iii)$]
	In the case of a purely gravitational or a null-dust shell, where 
	the jump function is given by $H=a v+\mathcal{H}(z^A)$, $a\in\mathbb{R}_{>0}$, $\mathcal{H}(z^A)\in\Fcal(\mathcal{O})$,   
	$g\vert_{\mathcal{O}}$ becomes (cf.\ \eqref{YpmMk:and:tauMk}) 
	\begin{align}
		\nn g=&-2dudv +\delta_{AB}dz^Adz^B\\
		\label{c0metricGW}&+\frac{2u_+( u)}{a}\lp \cp_{z^A}\cp_{z^B}\mathcal{H}
		+\frac{u\delta^{IJ}}{2a}(\cp_{z^I}\cp_{z^A}\mathcal{H})(\cp_{z^J}\cp_{z^B}\mathcal{H})\rp dz^Adz^B.
	\end{align}
	\end{itemize}
\end{theorem}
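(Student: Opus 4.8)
The plan is to prove Theorem~\ref{theorem:C0metric:Mk} by collecting the constructions and computations carried out earlier in this section; no genuinely new argument is needed beyond checking that the pieces fit together. For part~(i), I would first verify that the two defining prescriptions for $\{u,v,z^A\}$ match across $\nullhyp$: the trivial identification \eqref{minus:coords:uvz:coords} gives $\nullhyp\cap\Mkm=\{u=0\}$, while on $\Mkp$ the relation $\cu_+=u\,\cu_1$ with $\cu_1=(\cp_vH)^{-1}>0$ (cf.\ \eqref{u1v1x1}) gives $\nullhyp\cap\Mkp=\{u=0\}$ too, and at $u=0$ both prescriptions reduce to the matching embeddings $\phi^-$, $\phi^+$ of \eqref{phi:minus:Mk}, \eqref{phi:plus:Mk}, which are identified by $\Phi$. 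The Jacobian of \eqref{eq1a:0} at $u=0$ has determinant $\cu_1\,\cp_vH=1$, so shrinking yields a genuine two-sided neighbourhood $\mathcal{O}$. The Gaussian-null property holds because $\partial_u$ was imposed to be null and affinely geodesic on $\Mkp$ (and is trivially such on $\Mkm$), so $g_{uu}=g_{uz^A}=0$ identically and $g_{uv}\equiv-1$ --- features one also reads off directly from \eqref{c0metricprevious}. The remaining assertions about $\partial_u\vert_{\nullhyp}$ and $\partial_v\vert_{\nullhyp}$ then restate \eqref{eiyl}, \eqref{minussideVF} and conditions (A)--(C): $\partial_u=-\rig^-=-L^-$ is a rigging, future-directed (since $L^-$ is past), null and orthogonal to $\nullhyp\cap\{v=\mathrm{const.}\}$ by \eqref{otraeq.whatever.name}--\eqref{ffifsjodiajH432RY8}, with $g(\partial_u,\partial_v)\vert_{\nullhyp}=-g(L^-,k^-)=-1$ by~(C), while $\partial_v\vert_{\nullhyp}=k^-$ is a null generator of $\nullhyp$ by~(A).

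For part~(ii), I would note that \eqref{c0metricprevious} is exactly \eqref{c0metricprevious:}: \eqref{flatm:uvz} gives $\flatm$ on $\{u<0\}$ and \eqref{eq5} gives $\flatp$ on $\{u>0\}$ in the coordinates $\{u,v,z^A\}$, the two agree on $\nullhyp=\{u=0\}$, and replacing the explicit factor $u$ in \eqref{eq5} by the kink function $u_+(u)$ yields one formula valid on all of $\mathcal{O}$, equal to $\flatm$ for $u<0$ and $\flatp$ for $u\geq0$. Local Lipschitz regularity is then immediate, since $u_+$ is Lipschitz and every coefficient in \eqref{c0metricprevious} is a smooth function of $(v,z^A)$ only --- built from the smooth jump function $H$ through $[\bY]$ and $p$ (cf.\ \eqref{YpmMk:and:tauMk}--\eqref{def:eg:flux:pressure}) and extended off $\nullhyp$ independently of $u$ --- so $g\vert_{\mathcal{O}}$ is in addition smooth on each side of $\nullhyp$. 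For part~(iii) I would simply specialize: for $H=av+\mathcal{H}(z^A)$ with $a>0$ one has $\cp_vH=a$ and all $v$-derivatives and mixed derivatives of $H$ vanish, so \eqref{YpmMk:and:tauMk}--\eqref{def:eg:flux:pressure} give $p=0$, $[\Y_{vz^A}]=0$ and $[\Y_{z^Az^B}]=-a^{-1}\cp_{z^A}\cp_{z^B}\mathcal{H}$; inserting these into \eqref{c0metricprevious} kills the $dv^2$ and $dv\,dz^A$ terms and collapses the $dz^Adz^B$ term to \eqref{c0metricGW}.

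Honestly, for the theorem \emph{as stated} there is essentially no obstacle: the difficulty was front-loaded into arriving at \eqref{eq5}, i.e.\ transforming $\flatp$ into Gaussian-null form in the new coordinates. That step is the real content, and its crux is spotting the algebraic identities that force the potential $du^2$ and $du\,dz^A$ contributions to cancel --- among them $2M-\delta_{AB}\qfunction^A\qfunction^B=0$ and $dM-\delta_{AB}\qfunction^Ad\qfunction^B=0$ from \eqref{u1v1x1}, together with \eqref{otraeq.whatever.name} (nullity of $\partial_u$) and \eqref{ffifsjodiajH432RY8} (orthogonality of $\partial_u$ to the $\{v=\mathrm{const.}\}$ planes) --- after which the surviving coefficients organize into the $u$-polynomials in $[\bY]$ and $p$ displayed in \eqref{eq5}. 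A useful consistency check on this computation, which I would carry out, is that for affine $H$ one recovers global Minkowski space.
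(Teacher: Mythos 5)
Your proposal is correct and takes essentially the same approach as the paper: Theorem \ref{theorem:C0metric:Mk} is stated as a summary of the computations leading up to it in Section \ref{sec:lip:metric}, and what you do is verify that those computations indeed deliver all three claims, including the Jacobian/diffeomorphism check at $u=0$ (determinant $\cu_1\,\cp_vH=1$) and the derivation of the Gaussian-null identities from nullity and affine-geodesy of $\partial_u$, both of which the paper states but does not spell out. Two small remarks: the orthogonality of $\partial_u$ to $\nullhyp\cap\{v=\mathrm{const.}\}$ is most directly a restatement of condition (C) ($g^\pm(L^\pm,v_I^\pm)=0$), with \eqref{ffifsjodiajH432RY8} being the consequence rather than the cause; and your phrase ``replacing the explicit factor $u$ by $u_+(u)$'' should be read as replacing the one pulled-out factor of $u$ in each term of \eqref{eq5}, leaving the inner $u$ inside the parentheses intact, which is exactly what \eqref{c0metricprevious} does.
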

Observe that the metric $g$ by the above is locally Lipschitz continuous but it is even piecewise smooth, as it is smooth in the regions $\Mkpm\setminus\nullhyp\subset\M$. 
\begin{remark}\label{rem:specialcase}
	In the purely gravitational or null-dust case, if the spacetimes 
	$(\Mkpm,\flatpm)$ are $4$-dimensional, one can define complex spatial  coordinates 
	\begin{equation}
		\mathcal{Z}\defi \frac{z^2+iz^3}{\sqrt{2}},\qquad \ov{\mathcal{Z}}\defi \frac{z^2-iz^3}{\sqrt{2}},
	\end{equation}
	where $i$ denotes the imaginary unit, and then a direct calculation shows that the one-form
	\begin{equation}
		\label{def:omega:form}\bs{\omega}\defi d\mathcal{Z}+\frac{u_+(u)}{a}\Big( (\cp_{\ov{\mathcal{Z}}}\cp_{\mathcal{Z}}\mathcal{H}) d\mathcal{Z}+(\cp_{\ov{\mathcal{Z}}}\cp_{\ov{\mathcal{Z}}}\mathcal{H}) d\ov{\mathcal{Z}}\Big)
	\end{equation}
	satisfies (we let $\ov{\bs{\omega}}$ be the conjugate of $\bs{\omega}$)
	\begin{equation}
		\nn 2\bs{\omega}\otimes_s\ov{\bs{\omega}}=(dz^2)^2+(dz^3)^2+\frac{2u_+(u)}{a}(\cp_{z^J}\cp_{z^B}\mathcal{H}) \lp \delta_A^J+\frac{u}{2a}\delta^{IJ}\cp_{z^I}\cp_{z^A}\mathcal{H}\rp dz^Adz^B.
	\end{equation}
	Thus, the metric \eqref{c0metricGW} can be expressed in terms of $\bs{\omega}$ as
	\begin{align}
		\label{c0metricGW:4dim} g=-2dudv +2\bs{\omega}\otimes_s\ov{\bs{\omega}}, 
	\end{align}
	which is the standard way of writing the Lipschitz continuous metric $g$ when studying the matching of two 
	Minwkoski spaces across a null hyperplane by means of the cut-and-paste method (see e.g.\ \textup{\cite{penrose1968twistor,podolsky2017penrose}}).\ 
	As discussed in \textup{\cite{manzano2021null}}, the positive real number $a$ can always be set to one without loss of generality 
	by performing a boost in $(\Mkp,\flatp)$.\ In that case,  
	\eqref{c0metricGW:4dim} becomes \textup{\cite[Eq.\ (5)]{podolsky2017penrose}}.\   
	\end{remark}

\section{Distributional metric form in the Minkowski case}\label{sec:dist:metric}

In this section we derive an alternative distributional metric for the matched spacetime $(\M,g)$ constructed in Section \ref{sec:intro:MK} with metric \eqref{c0metricprevious}. To this end we will use Penrose's \textit{cut-and-paste} method \cite{Penrose:1972xrn} as a strong guidance, cf.\ Section \ref{sec:intro}.\ We start by recalling the essence of this method, in the present context: First cut Minkowski space along the null hyperplane $\nullhyp=\{{\mathcal U}_\pm=0\}$, splitting it into two separate regions $(\Mkpm,\flatpm)$ as defined in \eqref{Mk:spacetimes}, and then reattach the two ``halves" $\Mkpm$ with a warp along the null generators of the matching hypersurface $\nullhyp$
given by 
\begin{equation}
\label{penrose:junct:cond1}\bigl\{\cu_+=\cu_-=0,\quad\cv_+=\cv_-+\mathcal{H}(x_-^A),\quad x_+^A=x_-^A\bigr\}
\end{equation}
in the flat coordinates introduced in \eqref{Mk:spacetimes}.\ As a consequence of \eqref{phi:minus:Mk}-\eqref{phi:plus:Mk}, conditions \eqref{penrose:junct:cond1} amount to impose a jump function of the form $H=v+\mathcal{H}(z^A)$ which, as discussed in Section \ref{sec:intro:MK}, gives rise to purely gravitational or null-dust shells.\ Observe that this is a first strong connection between the methods employed above an the cut-and-paste approach, see \cite[Sec.\ 6]{manzano2021null}.\  The second such connection was established in Remark \ref{rem:specialcase}. Indeed, Penrose's impulsive plane waves in Rosen form \cite[p.\ 103]{Penrose:1972xrn} (see also \cite{AB:97,PV:98}) are precisely a special case of \eqref{c0metricGW:4dim}. 

As mentioned before, the cut-and-paste method allows to write 
the metric of the resulting spacetime in terms of a Dirac delta distribution $\delta$, cf.\ \eqref{CAP:Mk:metric}, where the coordinates $\{\cu,\cv,\cx,\mathcal{Y}\}$ coincide with $\{\cu_{\pm},\cv_{\pm},x_{\pm}^2,x_{\pm}^3\}$ in the regions $\Mkpm\setminus\nullhyp$.\ The key advantages of the distributional form \eqref{CAP:Mk:metric} are that the spacetime metric of the matched spacetime takes the same, manifestly flat form in the spacetime regions before and after the shell, and that the shell’s information can be entirely encoded in the impulsive term of the metric.\ This, however, comes at the price that, at face value, the metric \eqref{CAP:Mk:metric} has to be considered as being only formal as it firmly lies beyond the threshold regularity for metrics that can reasonably be assigned a distributional curvature, which is locally $H^1\cap L^\infty$ \cite{geroch1987strings,lefloch2007definition,SV:09}. In the particular case of \eqref{CAP:Mk:metric}, the high degree of symmetry allows to (formally) calculate the curvature, since no ill-defined products of distributions arise.\ Moreover, such low regularity spacetimes can in general be handled rigorously using nonlinear distributional geometry \cite[Ch.\ 3]{grosser2013geometric}, which is especially true for the notorious ``discontinuous coordinate transformation'' which Penrose used to (formally) demonstrate the equivalence of both his forms of the metric, cf.\ \cite{KS:99,samann2023cut}.

In this section, we will follow this circle of ideas to generalize the Penrose distributional metric and his ``discontinuous coordinate transformation'' in a formal sense. In particular, the generalization we aim at is the following. In all cut-and-paste constructions so far, the warp \eqref{penrose:junct:cond1} is such that any two points along a null generator experience the \textit{same} jump at the matching hypersurface.\ This restriction, while still allowing for some generality due to the freedom in the choice of $\mathcal{H}$, significantly limits the class of shells under consideration, and in particular excludes those with nonzero energy flux and pressure (recall \eqref{YpmMk:and:tauMk}). This raises the quest to extend the cut-and-paste construction by Penrose \textit{et al}.\ to generic matchings of two regions of Minkowski across a null hyperplane. More specifically, it is worth investigating whether the metric 
\eqref{c0metricprevious} of the spacetime resulting from a generic matching of $(\Mkpm,\flatpm)$ across the hyperplanes $\nullhyp^{\pm}$ can be written in a distributional form in such a way that the full information of the shell is encoded in an impulsive Dirac-delta term.\ This is indeed feasible, as we shall see next. 

Let $\lp\mathcal{M},g\rp$ be the spacetime of Theorem \ref{theorem:C0metric:Mk}. Then, based on the coordinates $\{u,v,z^A\}$ of Theorem \ref{theorem:C0metric:Mk}(i) we define a coordinate system $\{\cu,\cv,\cx^A\}$  via the ``discontinuous coordinate transform''
\begin{align}
	\label{disc:coord:transf:u} \cu&\defi u+\up\lp \cu_1-1\rp,\\ 
	\label{disc:coord:transf:v}\cv&\defi v+\heav\lp H-v\rp+\up\cv_1,\\ 
	\label{disc:coord:transf:x} \cx^A&\defi z^A+\up x_1^A.
\end{align}
Now we claim that the metric \eqref{c0metricprevious} of Theorem \ref{theorem:C0metric:Mk}(ii) transforms to
\begin{equation}
	\label{metric:distributional:Mk}
	g=-2d\cu d\cv+\delta_{AB}d\cx^A \cx^B+2\tcte{\hor(v,z^C)\delta(u)} d\cu^2,
\end{equation}
where $\delta$ is the Dirac function and $\hor$ is a scalar function defined by 
\begin{equation}
\label{def:hor}\hor(v,z^C)\defi (\cp_vH)\frac{1+(\cp_vH)}{1+(\cp_vH)^2}\lp H-v\rp. 
\end{equation}
To establish this claim, we carefully compute the corresponding terms keeping precise track of the regularities used especially in the nonlinear terms.  

We start with the calculation of $d\cu$ and $d\cx^A$.\ Since 
$\{\cu,\cx^A\}$ are Lipschitz continuous, they are
differentiable almost everywhere by Rademacher's theorem.  
Therefore, to calculate $\{d\cu,d\cx^A\}$ we perform the ``piecewise almost everywhere'' calculation\footnote{Here we write $u_+=\theta(u)u$ as a product of $L^\infty$-functions.}
\begin{align}
\label{dcxA} d\cx^A=&\spc 
dz^A + \theta(u)\lp x_1^A du+udx^A_1 \rp 
= \big( 1-\theta(u)\big) dx_-^A + \theta(u) dx_+^A,\\
\label{dcu} d\cu=&\spc 
\big(1-\theta(u)\big) du +\theta(u)\big(\cu_1 du+ud\cu_1\big)=\big(1-\theta(u)\big)d\cu_-+\theta(u)d\cu_+,
\end{align}
where in the last steps we used \eqref{eq1a} and \eqref{one-forms:minus:side}.\ 

Now the coordinate $\cv$ is only $L^\infty$. So, in order to compute the one-form $d\cv$ we need to consider it in the space of distributions, i.e. we use $L^{\infty}_{\textup{loc}}\subseteq L^{1}_{\textup{loc}} \subseteq\mathcal{D}'$ and the product rule for the product $C^\infty\cdot\mathcal{D}'$. We obtain 
\begin{align}
\nn d\cv=&\spc dv+\theta(u) \cv_1 du +u_+(u)d\cv_1+\theta(u)(dH-dv)+\delta(u)(H-v)du\\
=&\spc \big(1-\theta(u)\big) dv+\theta(u)\big( \cv_1 du +ud\cv_1+dH\big)+\delta(u)(H-v)du\\
\label{dcv}=&\spc \big(1-\theta(u)\big) d\cv_-+\theta(u)d\cv_++\delta(u)(H-v)du,
\end{align} 
where in the last step we again used \eqref{eq1a} and \eqref{one-forms:minus:side}.

The spatial part of the metric \eqref{metric:distributional:Mk}, $\delta_{AB}d\cx^A d\cx^B$, again is Lipschitz and so another ``almost everywhere calculation'' leads to
\begin{align}
\label{dist:metric:term:1} \delta_{AB}d\cx^A d\cx^B&=\lb
\begin{array}{l}
\delta_{AB}dx_-^A dx_-^B\vert_{u<0}\\  
\delta_{AB} dx_+^A dx_+^B\vert_{u>0}  \\
\end{array}
\rd
=\delta_{AB}\Big( \big(1-\theta(u)\big)dx_-^Adx_-^B+\theta(u)dx_+^Adx_+^B\Big).
\end{align}

By following an analogous reasoning, we can compute the product $d\cu^2$ to
\begin{align}
\label{dcu:squared}d\cu^2&=\lb
\begin{array}{l}
d\cu_-^2\vert_{u<0}\\  
d\cu_+^2\vert_{u>0}  \\
\end{array}
\rd
= \big(1-\theta(u)\big)d\cu_-^2+\theta(u)d\cu_+^2.
\end{align}

In the term $-2d\cu d\cv$, for the first time, we encounter the ``problematic product'' $L^{\infty}_{\textup{loc}}\cdot \mathcal{D}'$ which is undefined in standard distribution theory and we resort to regularisation. More precisely, we take the following approach which is a sensible modelling of the situation at hand. Viewing the shell as a limiting case of thick shells with an arbitrarily regularized profile we regularize the Dirac-$\delta$ in a very general way, i.e.\ by a so called model delta net $\rho_\eps$, see Appendix \ref{app:distributional:identities}. But then we naturally view the Heaviside function $\theta$ as originating from the same thick shell and hence regularize it by the prime function of $\rho_\eps$, i.e.\ of the model delta net that represents the Dirac-$\delta$. Hence we regularize the $\theta$-function by $\theta_\eps(x)=\int^x\rho_\eps(s)ds$. Then we calculate the regularisation product of $\theta\,\delta$ (the model product in the language of  \cite[Ch.\ II § 7]{Obe:92}), i.e. the weak limit of $\rho_\eps\,\theta_\eps$, to obtain $\frac{1}{2}\delta$, see Appendix \ref{app:distributional:identities} for details. In the same vein we obtain the regularisation product $\theta^2=\theta$ and the following consequences which we will use below
\begin{equation}
\label{products:2}\big(1-\theta(u)\big)\cdot \big(1-\theta(u)\big)=1-\theta(u),\quad\big(1-\theta(u)\big)\cdot\theta(u)=0,\quad\big(1-\theta(u)\big)\cdot\delta(u)=\frac{1}{2}\delta(u).
\end{equation}

With this regularisation products at hand we obtain (recall \eqref{dcu}-\eqref{dcv})
\begin{align}
\nn -2d\cu d\cv=&-2 \Big(\big(1-\theta(u)\big)d\cu_-+\theta(u)d\cu_+\Big)  \Big(\big(1-\theta(u)\big) d\cv_-+\theta(u)d\cv_++\delta(u)(H-v)du\Big)\\
\label{mid:eq:1}=&-2\big(1-\theta(u)\big)d \cu_- d\cv_- -2\theta(u)d\cu_+ d\cv_+ -\delta(u) (H-v)\big(d\cu_-+d\cu_+\big) du.
\end{align}

We can now extend $d\cu_{\pm}$ arbitrarily (but smoothly) to a neighbourhood of the matching hypersurface, and use the fact that for any two continuous one-forms $\bs{\omega}_1,\bs{\omega}_2\in\Gamma(T^{\star}\M)$ satisfying $\bs{\omega}_1\vert_{u=0}=\bs{\omega}_2\vert_{u=0}$, it holds that $\bs{\omega}_1\cdot \delta(u) =\bs{\omega}_2\cdot \delta(u)$ (since $\delta$ is a distribution of order zero).  
In particular, this allows us to write 
(cf.\ \eqref{eq1a} and \eqref{one-forms:minus:side})
\begin{equation}
\label{mid:eq:2} \delta(u) d\cu_-=\delta(u) du,\quad\delta(u) d\cu_+=\delta(u) \cu_1 du.
\end{equation}

\begin{figure}[t]
\includegraphics[angle=-90,width=1\linewidth]{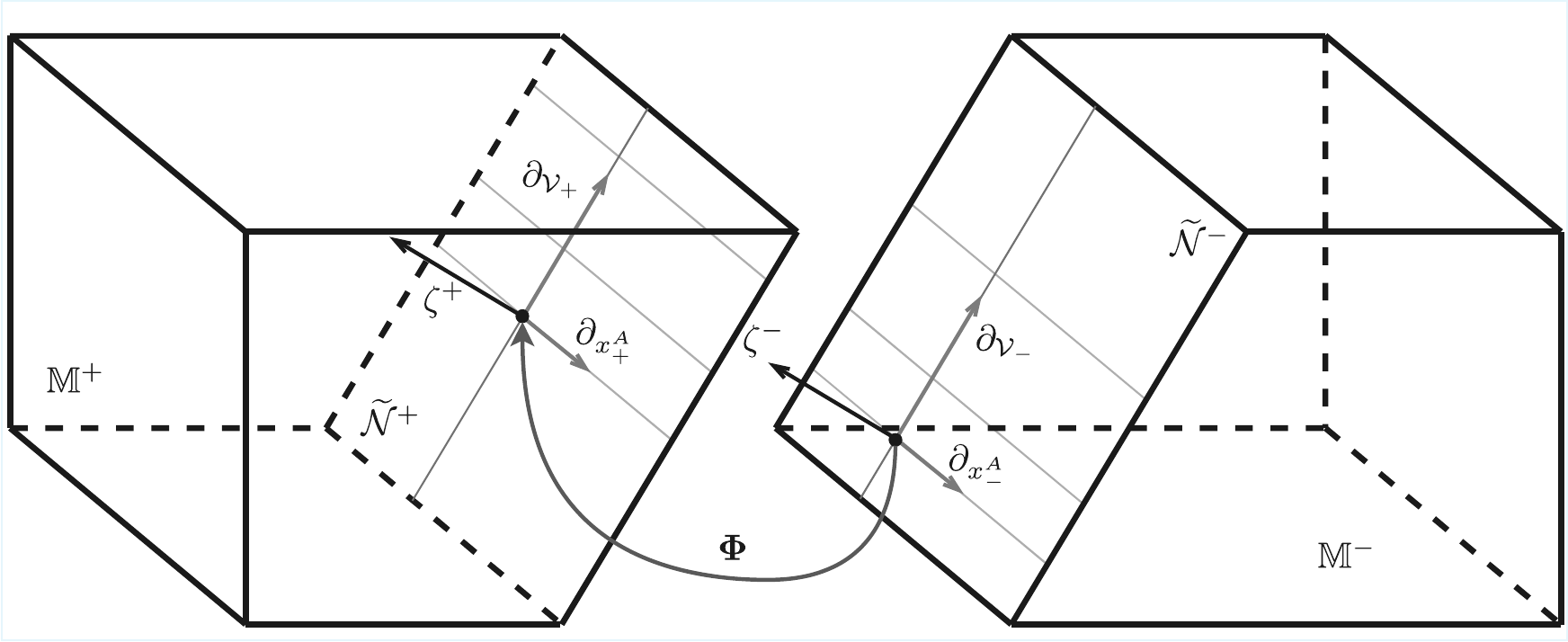}
\caption{The cut-and-paste construction corresponding to the most general matching of two regions of Minkowski space across a null hyperplane.\ Minkowski space is cut along a null hyperplane, and then the two regions are reattached with an arbitrary jump along the null direction of the matching hypersurface.}
\label{fig:CAP}
\end{figure}

By making use of \eqref{mid:eq:2} in the last term of \eqref{mid:eq:1}, one obtains  
\begin{align}
\label{dist:metric:term:2} -2d\cu d\cv=&-2\big(1-\theta(u)\big)d \cu_- d\cv_- -2\theta(u)d\cu_+ d\cv_+ -\delta(u) (H-v)\big(1+\cu_1 \big) du^2.
\end{align}
Following the same reasoning, we can likewise compute the metric term $2\hor(v,z^C) \delta(u) d\cu^2$ explicitly.\ \tcte{In fact, combining  
\eqref{def:hor}, \eqref{dcu:squared}-\eqref{products:2} with the fact that $\hor$ can be rewritten as $\hor(v,z^C)=\frac{1+\cu_1}{1+\cu_1^2}\lp H-v\rp$ (by \eqref{u1v1x1})},
\tcte{one gets}
\begin{align}
\nn 2\hor(v,z^C) \delta(u) d\cu^2=&\spc \frac{2\lp 1+\cu_1\rp}{1+\cu_1^2}\delta(u)(H-v)\Big(\big(1-\theta(u)\big)d\cu_-^2+\theta(u)d\cu_+^2\Big)\\
\label{dist:metric:term:3}=&\spc \frac{ 1+\cu_1}{1+\cu_1^2}\delta(u)(H-v)\Big(d\cu_-^2+d\cu_+^2\Big)
=\delta(u)(H-v)\lp 1+\cu_1\rp du^2,
\end{align}
where in the last step we have used \eqref{mid:eq:2}.\ Finally, inserting \eqref{dist:metric:term:1}, \eqref{dist:metric:term:2} and \eqref{dist:metric:term:3} into \eqref{metric:distributional:Mk}, it is straightforward to get the Lipschitz continuous metric form \eqref{c0metricprevious} of the matched spacetime.

Therefore, using regularisation products, we have established that the Lipschitz continuous metric \eqref{c0metricprevious} transforms to the distributional metric \eqref{metric:distributional:Mk} via the ``discontinuous coordinate transformation'' \eqref{disc:coord:transf:u}-\eqref{disc:coord:transf:x}.
This means that, for \textit{completely general} matchings of two regions of Minkowski space across a null hyperplane, the information of the shell can be entirely encoded in an impulsive, Dirac-$\delta$ term supported on the matching hypersurface.\ In other words, by \eqref{disc:coord:transf:u}-\eqref{metric:distributional:Mk} we have extended the cut-and-paste method to general shells with possibly nonzero energy density, energy flux, and pressure  in Minkowski space. The corresponding ``generalized Penrose conditions" are (cf.\  \eqref{phi:minus:Mk}-\eqref{phi:plus:Mk} 
and \eqref{disc:coord:transf:u}-\eqref{disc:coord:transf:x})
\begin{align}
	\label{gen:penrose:junct:cond} &\cu_+\stackbin{\nullhyp}=\cu_-=0,\qquad 
	\cv_+\stackbin{\nullhyp}= H\big(\cv_-,x_-^A\big),\qquad 
	x_+^A\stackbin{\nullhyp}=x_-^A,
\end{align}
see also Figure \ref{fig:CAP}.

\begin{remark}
As discussed before, in the standard cut-and-paste construction by Penrose et al.\ \textup{\cite{penrose1968twistor,Penrose:1972xrn,podolsky2017penrose}}, the jump function is of the form $H(v,z^A)=v+\mathcal{H}(z^A)$, so that $\cp_vH=\cu_1=1$ and (cf.\ \eqref{disc:coord:transf:u}-\eqref{disc:coord:transf:x})
\begin{align}
\cu= u,\qquad \cv= v+\heav\mathcal{H}(z^A)+\up\cv_1,\qquad \cx^A= z^A+\up x_1^A.
\end{align}
Thus, \eqref{metric:distributional:Mk}, \eqref{gen:penrose:junct:cond} become 
the well-known cut-and-paste distributional metric \eqref{CAP:Mk:metric}, 
as well as the well-known Penrose junction conditions \eqref{penrose:junct:cond1}.\ 
\end{remark}

\begin{remark}
Two aspects of the metric form \eqref{metric:distributional:Mk} deserve attention: (i) the argument of the Dirac delta $\delta$ is the coordinate $u$ rather than $\cu$, and (ii) the impulsive term is ruled by a function $\hor$, which has been defined in terms of the coordinates $\{v,z^A\}$ instead of $\{\cu,\cv,\cx^A\}$.

The reason for (i), i.e.\ for using $u$ as the argument for $\delta$ is that, for generic shells (which may be such that $\cu_1\neq0$, cf.\ \eqref{u1v1x1}), $\cu$ is only Lipschitz continuous at the shell (see \eqref{disc:coord:transf:u}) and hence it is a highly delicate matter to deal with $\delta\lp\cu\rp$. In fact, this necessitates the use of a fully nonlinear distributional geometry, cf.\ \textup{\cite[Ch.\ 3]{grosser2013geometric}}, which is beyond the scope of this article and a topic for future research.\ In analogy with the works on the cut-and-paste formalism \textup{\cite{SSLP:16,SS:17}}, a first step will be a fully nonlinear analysis of the geodesic equation of the metric \eqref{metric:distributional:Mk} with a generic smooth function $\hor(\cu,\cv,\cx^A)$.

Concerning (ii), the scalar function $\hor$ must be well-defined on the matching hypersurface $\nullhyp$,  
and our approach only allows to express it in terms of $\{v, z^A\}$, as otherwise $\hor$ would in general depend on the coordinate $\cv$
which is only defined piecewise almost everywhere, and in particular not on $\nullhyp$. However, it is expected that a nonlinear distributional analysis will also shed some light on the specific form of $\hor$ in terms of the coordinates $\{\cu,\cv,\cx^A\}$.
\end{remark}

\begin{remark}
In general, the function $\hor$ will depend not only on the coordinates $\{\cv,\cx^A\}$ but also on $\cu$.\ This means that the metric form \eqref{metric:distributional:Mk} no longer belongs to the class of $pp$-wave spacetimes, but to the more general Kundt class \textup{\cite[Ch.\ 18]{griffiths2009exact}}.
\end{remark}

\section{Matching of two regions of (anti-)de Sitter spacetime}\label{sec:AdS}

In this section we extend our results of Sections \ref{sec:lip:metric} and \ref{sec:dist:metric} to the matching of two regions of a constant-curvature spacetime across a totally geodesic null hypersurface. The generalization will rely on the existence of a suitable null, geodesic vector field $\hat\xi$ on the spacetime after the shell, which is also a rigging at the matching hypersurface. The key aspect of the construction is to ensure that second $\hat\xi$-derivatives of certain coordinates vanish (cf.\ \eqref{second:Der:coords}), 
and in this way obtain a linear coordinate transformation analogous to \eqref{coord:transf:general}. In the following lemma we construct such a vector field in the more general case of a conformally flat spacetime.
\begin{lemma}\label{lem:geodesic:vector}
Consider a conformally flat spacetime $(\hat{\M},\hat{g})$, i.e. 
	\begin{equation}
		\hat{g}=\frac{\eta_{\mu\nu}d\hat\cx^{\mu}d\hat\cx^{\nu}}{\Omega^2},
	\end{equation}
where $\eta$ is the flat metric, $\{\hat\cx^{\alpha}\}$ is a coordinate system on $(\hat\M,\hat g)$ and $\Omega\in\Fcal^{\star}(\M)$.\ 
Let $\hat\N\subset\hat\M$ be an embedded null hypersurface, $\hat\xi$ a null rigging  along $\hat\N$, and $\hat\nabla$ the Levi-Civita covariant derivative of $\hat g $.\ Extend $\hat\xi$ uniquely to a suitable neighbourhood $\hat{\mathcal{O}}$ of $\hat\N$ 
by enforcing 
	\begin{align}
		\label{def:function:F}\hat\nabla_{\hat\xi}\hat\xi=-F\hat\xi,\qquad 
		F\defi \frac{2d\Omega(\hat\xi)}{\Omega}.\ 
	\end{align}
	Then $\hat\xi$ is null on $(\hat{\mathcal{O}},\hat g\vert_{\hat{\mathcal{O}}})$, and
	$\hat\xi\big(\hat\xi\big(\hat\cx^{\alpha}\big)\big)\vert_{\hat{\mathcal{O}}}=0$.\  
\end{lemma}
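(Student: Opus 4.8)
The plan is to compute $\hat\nabla_{\hat\xi}\hat\xi$ directly from the conformal relation $\hat g=\eta/\Omega^2$, using the standard formula relating the Levi-Civita connections of two conformally related metrics, and to observe that the defining equation \eqref{def:function:F} has been engineered precisely so that the conformal correction terms cancel against $-F\hat\xi$. Recall that if $\hat g=e^{2\omega}\eta$ with $e^{2\omega}=\Omega^{-2}$ (so $\omega=-\ln\Omega$), then for any vector fields $X,Y$
\begin{equation}
\nn \hat\nabla_XY=\nabla^\eta_XY+(X\omega)Y+(Y\omega)X-\eta(X,Y)\,\mathrm{grad}_\eta\,\omega,
\end{equation}
where $\nabla^\eta$ is the flat connection. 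Setting $X=Y=\hat\xi$ and using that $\hat\xi$ is null with respect to $\eta$ on $\hat\N$ — which I must first establish — the last term drops out on $\hat\N$, leaving $\hat\nabla_{\hat\xi}\hat\xi=\nabla^\eta_{\hat\xi}\hat\xi+2(\hat\xi\omega)\hat\xi$. Since $\hat\xi\omega=-\hat\xi(\ln\Omega)=-d\Omega(\hat\xi)/\Omega$, the condition \eqref{def:function:F} reads $\hat\nabla_{\hat\xi}\hat\xi=2(\hat\xi\omega)\hat\xi$, which forces $\nabla^\eta_{\hat\xi}\hat\xi=0$, i.e.\ $\hat\xi$ is affinely geodesic for the flat metric.

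\textbf{Nullity off the hypersurface.} The first genuine step is to show $\hat g(\hat\xi,\hat\xi)=0$ not just on $\hat\N$ but on the whole neighbourhood $\hat{\mathcal O}$. I would compute $\hat\xi\big(\hat g(\hat\xi,\hat\xi)\big)=2\hat g(\hat\nabla_{\hat\xi}\hat\xi,\hat\xi)=-2F\,\hat g(\hat\xi,\hat\xi)$, using metric compatibility of $\hat\nabla$ and \eqref{def:function:F}. Thus the scalar $s\defi \hat g(\hat\xi,\hat\xi)$ satisfies the linear ODE $\hat\xi(s)=-2Fs$ along the integral curves of $\hat\xi$; since $s=0$ on $\hat\N$ and $\hat\xi$ is transversal there (it is a rigging), every integral curve near $\hat\N$ meets $\hat\N$ and carries the initial value $s=0$, so $s\equiv0$ on $\hat{\mathcal O}$ by uniqueness for linear ODEs. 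In particular $\hat\xi$ is also $\eta$-null on all of $\hat{\mathcal O}$ (since $\eta(\hat\xi,\hat\xi)=\Omega^2\hat g(\hat\xi,\hat\xi)=0$), so the computation in the previous paragraph showing $\nabla^\eta_{\hat\xi}\hat\xi=0$ is in fact valid throughout $\hat{\mathcal O}$, not merely on $\hat\N$.

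\textbf{Vanishing of second derivatives.} With $\nabla^\eta_{\hat\xi}\hat\xi=0$ established on $\hat{\mathcal O}$, the conclusion $\hat\xi(\hat\xi(\hat\cx^\alpha))=0$ follows exactly as in \eqref{second:Der:coords}: since the $\{\hat\cx^\alpha\}$ are flat coordinates, the one-forms $d\hat\cx^\alpha$ are $\nabla^\eta$-parallel ($\nabla^\eta_\mu\nabla^\eta_\nu\hat\cx^\alpha=\partial_\mu\partial_\nu\hat\cx^\alpha=0$), so
\begin{equation}
\nn \hat\xi\big(\hat\xi(\hat\cx^\alpha)\big)=\hat\xi^\mu\nabla^\eta_\mu\big(\hat\xi^\nu\nabla^\eta_\nu\hat\cx^\alpha\big)=\big(\hat\xi^\mu\nabla^\eta_\mu\hat\xi^\nu\big)\nabla^\eta_\nu\hat\cx^\alpha+\hat\xi^\mu\hat\xi^\nu\nabla^\eta_\mu\nabla^\eta_\nu\hat\cx^\alpha=0,
\end{equation}
the first term vanishing by affine geodesy of $\hat\xi$ w.r.t.\ $\eta$ and the second because $\nabla^\eta\,d\hat\cx^\alpha=0$. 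I expect the main obstacle to be bookkeeping rather than conceptual: one must be careful that $\hat\xi$ is uniquely and smoothly extendable off $\hat\N$ by the first-order ODE system \eqref{def:function:F} — which is a standard flow-out argument once one checks the right-hand side is smooth, noting $\Omega$ is nowhere zero — and one must keep straight which statements hold only on $\hat\N$ (transversality, the rigging property) versus on $\hat{\mathcal O}$ (nullity, geodesy, the vanishing of second derivatives), so that the propagation-along-the-flow argument for $s\equiv0$ is applied before invoking $\eta$-nullity in the conformal connection formula.
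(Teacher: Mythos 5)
Your proof is correct, and the nullity argument (the linear ODE $\hat\xi(s)=-2Fs$ with $s=0$ on $\hat\N$, then uniqueness) is exactly the paper's. For the vanishing of $\hat\xi(\hat\xi(\hat\cx^\alpha))$ you take a slightly different but equivalent route: you pass to the flat connection $\nabla^\eta$ via the conformal transformation law $\hat\nabla_XY=\nabla^\eta_XY+X(\omega)Y+Y(\omega)X-\eta(X,Y)\,\mathrm{grad}_\eta\omega$ and deduce the intermediate statement $\nabla^\eta_{\hat\xi}\hat\xi=0$ (i.e.\ $\hat\xi$ is $\eta$-affinely geodesic), after which the conclusion is immediate since $d\hat\cx^\alpha$ is $\nabla^\eta$-parallel. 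The paper instead works directly with $\hat\nabla$ and substitutes the explicit Christoffel symbols $\hat\Gamma^\nu_{\alpha\beta}$ of the conformally flat metric into the second-derivative identity; the cancellation of the $-F\hat\xi^\nu$ term against the Christoffel contribution happens in one line without ever isolating the claim that $\hat\xi$ is flat-geodesic. The two proofs encode the same underlying algebra, but your decomposition is arguably cleaner conceptually: it makes explicit the geometric mechanism (conformal-to-affine geodesic reparametrisation) behind the choice of $F$, which the paper leaves implicit. One small remark: you correctly flag that the conformal formula's last term drops only once $\eta$-nullity is known off $\hat\N$, and you establish that before using it — that ordering is exactly right and matches the paper.
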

\begin{proof}
First, we prove
that $\hat\xi$ is everywhere null in $\hat{\mathcal{O}}$.\  
Indeed, 
$\hat\nabla_{\hat\xi}\big(\hat g(\hat\xi,\hat\xi)\big)=2\hat g\big(\hat\nabla_{\hat\xi}\hat\xi,\hat\xi\big)=-2F\hat g(\hat\xi,\hat\xi)$,
which together with $ \hat g(\hat\xi,\hat\xi)\vert_{\hat\N}=0$ implies, by uniqueness of solutions of ordinary differential equations, that 
$\hat g(\hat\xi,\hat\xi)=0$ vanishes on $(\hat{\mathcal{O}},\hat g\vert_{\hat{\mathcal{O}}})$.\ A direct calculation then yields
\begin{align}
\nn \hat\xi(\hat\xi(\hat\cx^{\nu}))
&=(\hat\xi^{\alpha}\hat\nabla_{\alpha}\hat\xi^{\beta})\hat\nabla_{\beta}\hat\cx^{\nu}+\hat\xi^{\alpha}\hat\xi^{\beta}\hat\nabla_{\alpha}\hat\nabla_{\beta}\hat\cx^{\nu}
=-F\hat\xi^{\beta}\hat\nabla_{\beta}\hat\cx^{\nu}+\hat\xi^{\alpha}\hat\xi^{\beta}\hat\nabla_{\alpha}\hat\nabla_{\beta}\hat\cx^{\nu}\\
\nn &=-F\hat\xi^{\beta}\delta_{\beta}^{\nu}+\hat\xi^{\alpha}\hat\xi^{\beta}( \cp_{\alpha}\cp_{\beta}\hat\cx^{\nu}-\hat\Gamma^{\rho}_{\beta\alpha}\cp_{\rho}\hat\cx^{\nu})
=-F\hat\xi^{\nu}-\hat\xi^{\alpha}\hat\xi^{\beta} \hat\Gamma^{\nu}_{\beta\alpha}\\
\label{linear} &=-F\hat\xi^{\nu}+\frac{2}{\Omega} \hat\xi^{\alpha}(\cp_{\alpha}\Omega)\hat\xi^{\nu}=0 ,
\end{align}
where in the last line we used that 
$\hat\xi$ is null and that the Christoffel symbols of $\hat g$ satisfy 
$\hat\Gamma^{\nu}_{\alpha\beta}	=-\big( (\cp_{\alpha}\log\Omega)\delta_{\beta}^{\nu}+(\cp_{\beta}\log\Omega)\delta_{\alpha}^{\nu}-\hat g^{\nu\rho}(\cp_{\rho}\log\Omega)\hat g_{\alpha\beta}\big)$.\  
\end{proof}
Now, consider two regions $(\hat \M^{\pm},\hat g^{\pm})$ of a constant-curvature spacetime.\ 
It is well-known \cite{ferrandez2001geometry,navarro2016null} that $(\hat \M^{\pm},\hat g^{\pm})$ admit---up to isometries---only one totally geodesic null hypersurface.\ We let $\hat\N^{\pm}\subset \hat \M^{\pm}$ be two such hypersurfaces.\ Then, there exist coordinates $\{\cu_{\pm},\cv_{\pm},x_{\pm}^A\}$ adapted to $\hat\N^{\pm}$ where the metric $\hat g^{\pm}$ is conformally flat and $\hat\N^{\pm}=\{\cu_{\pm}=0\}$, i.e.\ 
\begin{align}
\nn \hat g^{\pm}&=\frac{\eta^{\pm}}{\Omega^2_{\pm}},\quad\flatpm=-2d\cu_{\pm} d \cv_{\pm}+\delta_{AB} dx_{\pm}^A dx_{\pm}^{B},\quad \cu_{+}\geq0,\quad \cu_{-} \leq 0,\\
\label{CF:spacetimes:1}	\Omega_{\pm}&\defi 1+\frac{\Lambda}{12}\lp \delta_{AB}x_{\pm}^Ax_{\pm}^B-2\cu_{\pm}\cv_{\pm}\rp.
\end{align}
Here $\Lambda$ denotes the cosmological constant, so that Minkowski, de Sitter and anti-de Sitter spacetimes correspond to $\Lambda=0$, $\Lambda>0$ and $\Lambda<0$, respectively.\ The coordinates $\{\cu_{\pm},\cv_{\pm},x^A_{\pm}\}$ cover a one-sided neighbourhood of the entire hypersurface $\hat\N^{\pm}$ when $\Lambda\leq0$, while for $\Lambda>0$ one generator of $\hat\N^{\pm}$ must be removed because $\hat\N^{\pm}\cong\mathbb{S}^{\n-1}\times\mathbb{R}$ and stereographic coordinates only cover the sphere minus one point. 

The induced metrics on $\hat\N^{\pm}$ are given by 
\begin{align}
\hat g^{\pm}\vert_{\hat\N^{\pm}}
=\Omega^{-2}_{\pm}\vert_{\hat\N^{\pm}}\delta_{AB} dx_{\pm}^A dx_{\pm}^{B}, 
\end{align}
and $\hat\N^{\pm}$ has product structure $S^{\pm}\times\mathbb{R}$ where $S^{\pm}\defi \{\cu_{\pm}=0, \cv_{\pm}=0\}$ is a spacelike cross-section and the null generators are along $\mathbb{R}$.\ 
Following a procedure analogous to the one in Section \ref{sec:intro:MK}, we choose  
$\{L^{\pm},k^{\pm},v_I^{\pm}\}$ to be 
\begin{equation}
	\label{basis:L,k,v:CF} 
	L^{\pm}=-\Omega^2_{\pm}\cp_{\cu_{\pm}},\qquad k^{\pm}=\cp_{\cv_{\pm}},\qquad v^{\pm}_I=\cp_{x^I_{\pm}},
\end{equation}
and construct metric data embedded in $\hat\M^-$ with the embedding and the rigging
\begin{align}
	\label{phi:minus:CF}
	\phi^-(v, z^I) =\left(\cu_-=0,\cv_-= v, x_-^I=z^I \right ):\N\longhookrightarrow\hat\N^-\subset\hat\M^-,\qquad \rig^-=L^-.
\end{align}

The feasibility of the matching relies on the solvability of 
\eqref{junct1}, which now reads (cf.\ \eqref{embedPhi+})
\begin{align}
	\label{jcCFcase}
	\Omega^{-2}_{-}\vert_{\{\cu_-=0\}}\delta_{IJ}\vert_{\phi^-(p)} &= \Omega^{-2}_{+}\vert_{\{\cu_+=0\}}\frac{\partial h^L}{\partial z^I}\frac{\partial h^K}{\partial z^J}\delta_{LK}\vert_{\phi^+(p)},\quad \forall p\in \N,\\
\label{phi:plus:CF:1}
		\textup{where}\quad \phi^+(v, z^I) &=\left(\cu_{+} =0, \cv_{+} = H(v, z^I), x_{+}^I = h^I (z^J) \right ).
\end{align}

This is an isometry condition between the cross-sections 
$\phi^{\pm}(\{v=\textup{const.}\})\subset\N^{\pm}$ and, as for Minkowski space,  it is identically satisfied when the identification between the null generators of $\hat\N^{\pm}$ is trivial, i.e.\ when $h^A=z^A$.\ In this case, the conformal factors coincide on $\hat\N^{\pm}$, namely 
$\Omega_{\pm}\vert_{\{\cu_{\pm}=0\}}=1+\frac{\Lambda}{12} \delta_{AB}z^Az^B\defi \Omega_{\N}$, and one can prove that 
\begin{equation}
\label{MHD:CF}\Big\{\N,\spc\gamma=\frac{\delta_{AB}}{\Omega^2_{\N}}dz^A\otimes dz^B,\spc\ellc=dv,\spc \elltwo=0\Big\}
\end{equation}
is null metric 
data $\{\phi^-,\rig^-\}$-embedded in $\hat\M^-$, and 
embedded 
in $\hat\M^+$ with (cf.\ \eqref{uniqxi}) 
\begin{align}
\label{phi:plus:CF}\phi^+(v, z^I)& =\left(\cu_{+} =0, \cv_{+} = H(v, z^I), x_{+}^I =  z^I \right ),\quad\textup{and}\\
\label{uniqxi:CF}
\rig^+&=-\Omega_{\N}^2\lp\frac{\cp H}{\cp v}\rp^{-1}\lp 
\cp_{\cu_{+}}	+  \delta^{AB} \frac{\cp H}{\cp z^A}
\lp \frac{1}{2} 
\frac{\cp H}{\cp z^B}     \cp_{\cv_{+}}+\cp_{x^B_{+}}\rp \rp.
\end{align}
Observe that, as in the case of Minkowski space (cf.\ Section \ref{sec:intro:MK}), $\rig^-$ points inwards 
and $\rig^+$ points outwards, which means that $\rig^{\pm}$
satisfy the orientation condition  \hyperlink{junc-cond-FHD}{$(ii)$} and hence that  the matching of $(\hat\M^{\pm},\hat g^{\pm})$ across $\hat\N^{\pm}$ is feasible for any\footnote{Recall that when the boundaries are totally geodesic, infinitely many matchings are possible \cite{manzano2022general,manzano2023matching}.} jump function $H(v,z^A)$.\ 
We call $(\hat\M,\hat g)$ the corresponding matched spacetime.

Next we explicitly construct coordinates $\{u,v,z^A\}$ in which the metric $\hat g$ of the matched spacetime is Lipschitz continuous.\ As in Section \ref{sec:lip:metric}, we enforce a trivial identification of coordinates on $(\hat\M^-,\hat g^-)$, i.e.\ 
\begin{equation}
\ld\{\cu_-=u,\spc\cv_-=v,\spc x^A_-=z^A\}\rv_{\hat\M^-}.
\end{equation}
This, in combination with \eqref{eiyl} and \eqref{basis:L,k,v:CF}, entails (recall that the two bases $\{\rig^{\pm},e_a^{\pm}\}$ are identified in the process of matching)
\begin{equation}
\label{third:equality}	e^{\pm}_1=\cp_{v},\qquad e^{\pm}_I=\cp_{z^I},\qquad \Omega_{\N}^{-2}\rig^{\pm}=-\cp_{u}.
\end{equation}
Now, $\rig^{\pm}$ are both null riggings along $\hat\N^{\pm}$, 
hence the coordinate vector $\cp_u$ is also a null rigging therein.\ 
We can therefore use Lemma \ref{lem:geodesic:vector} and extend $\cp_u$ uniquely to a null vector field $\hat\xi=\cp_u$ on $(\hat\M^+,\hat g^+)$ so that $\hat\xi(\hat\xi(\cu_+))=\hat\xi(\hat\xi(\cv_+))=\hat\xi(\hat\xi(x^A_+))=0$.\ Accordingly, the relation between the coordinate systems $\{\cu_+,\cv_+,x_+^A\}$ and $\{u,v,z^A\}$ is of the form \eqref{coord:transf:general}.\ This, together with  \eqref{phi:plus:CF}-\eqref{uniqxi:CF} and the third equality in \eqref{third:equality}, allows us to fix the functions $\cu_0,\cu_1,\cv_0,\cv_1,x_0,x_1^A$ and conclude that the coordinate transformation in $(\hat\M^+,\hat g^+)$ is given by \eqref{eq1a:0} and \eqref{u1v1x1}, just as in the case of Minkowski space. Summing up we have
\begin{align}
\label{one:more:eq:1}	\cu_-&=u  , & \cv_-&=v, & x_-^A&=z^A,\\
\label{one:more:eq:2}	\cu_+&=u  \hspace{0.075cm}\cu_1, & \cv_+&=H+u \cv_1, & x_+^A&=z^A+u  x_1^A,
\end{align}
where $\cu_1,\cv_1,x_1^A$ are defined by \eqref{u1v1x1}. The computations to derive the corresponding one-forms $\{d\cu_{\pm},d\cv_{\pm},dx_{\pm}^A\}$ and the Lipschitz metric of the matched spacetime are therefore the same as in Section \ref{sec:lip:metric}. Indeed, by reproducing the calculations therein, 
it is immediate to prove that the metrics $\hat g^{\pm}$ read 
\begin{align}
\nn \hat g^{\pm}&=\frac{\eta^{\pm}}{\Omega_{\pm}^2},\qquad\textup{where}\\
\Omega_{-}&= 1+\frac{\Lambda}{12}\lp \delta_{AB}z^Az^B-2uv\rp,\\
\nn \Omega_{+}&= 1+\frac{\Lambda}{12}\lp \delta_{AB}z^Az^B-2uv\rp+\frac{u\Lambda}{6}\lp \delta_{AB}\lp  z^Ax_1^B+\frac{u}{2}  x_1^A  x_1^B\rp-\cu_1\lp   H+u  \cv_1\rp+ v\rp,
\end{align}
and $\eta^{\pm}$ are given by \eqref{flatm:uvz} and \eqref{eq5}. Observe that $\flatp$ is written in terms of the tensor $[\bY]$ and the pressure of the Minkowski case (see \eqref{YpmMk:and:tauMk}-\eqref{def:eg:flux:pressure}), which a priori do \textit{not} need to coincide with the tensor $[\hat\bY]$ and the pressure $\hat p$ corresponding to the matching of $(\hat \M^{\pm},\hat g^{\pm})$.\ However, a straightforward calculation based on Proposition \ref{prop6} yields
\begin{align}
\nn [\hat \Y_{vv}]&=[\Y_{vv}],\qquad [\hat \Y_{vz^A}]=[\Y_{vz^A}],\\
[\hat \Y_{z^Az^B}]&=[\Y_{z^Az^B}]+\frac{\Lambda \delta_{AB}}{6\Omega_{\N}(\cp_vH)}\Big( v(\cp_vH)-H+z^C(\cp_{z^C}H)\Big).
\end{align}
Consequently, only the spatial components of $[\hat \bY]$ differ from those of $[\bY]$, and in particular the pressure $\hat p\defi [\hat \Y_{vv}] $ for arbitrary $\Lambda$ \cite[Rem.\ 2.16]{manzano2023matching} and the pressure $p$ for the Minkowski case coincide.\ Observe also that $[\hat\bY]=[\bY]$ if $\Lambda=0$, which represents a non-trivial consistency check of the previous results.

By defining the Lipschitz 
function 
\begin{align*}
Q\defi \lb 
\begin{array}{ll}
1+\frac{\Lambda}{12}\lp \delta_{AB}z^Az^B-2uv\rp & u<0\\
1+\frac{\Lambda}{12}\lp \delta_{AB}z^Az^B-2uv\rp+\frac{u\Lambda}{6}\lp \delta_{AB}\lp  z^Ax_1^B+\frac{u}{2}  x_1^A  x_1^B\rp-\cu_1\lp   H+u  \cv_1\rp+ v\rp & u\geq0
\end{array}\rd
,
\end{align*}
it is immediate to check that 
the metric $\hat g$ of the matched spacetime
is given by
\begin{align}
\label{C0:metric:CF}\hat g&
=\frac{g}{Q^2},
\end{align}
where $g$ is the Lipschitz continuous metric \eqref{c0metricprevious}.
Since both $Q$ and $g$ are Lipschitz continuous, it follows that $\hat g$ is Lipschitz continuous as well. 

Finally we also derive the distributional form of the metric \eqref{C0:metric:CF}. Introducing coordinates $\{\cu,\cv,\cx^A\}$ exactly as in the flat case, i.e.\ \eqref{disc:coord:transf:u}--\eqref{disc:coord:transf:x} and using \eqref{metric:distributional:Mk}-\eqref{def:hor} and \eqref{C0:metric:CF}, it is immediate to check that $\hat g$ takes the  distributional form 
\begin{equation}
	\label{metric:distributional:CF}
	\hat g=\frac{-2d\cu d\cv+\delta_{AB}d\cx^A \cx^B+2\tcte{\hor(v,z^C)\delta(u)} d\cu^2}{\lp 1+\frac{\Lambda}{12}\lp \delta_{AB}\cx^A\cx^B-2\cu\cv\rp\rp^2}.
\end{equation}
Furthermore, it is also straightforward to conclude that the generalized Penrose junction conditions for the present case 
are again \eqref{gen:penrose:junct:cond}.\ 
In the case of purely gravitational and null-dust shells, it also holds that the Penrose junction conditions remain the same even with non-zero cosmological constant (see \cite{podolsky2019cut,PS:22,podolsky2017penrose,samann2023cut}), hence our conclusions are consistent with previous analysis in the literature.\ Consequently, the results of this section generalize the cut-and-paste construction method for 
spacetimes with non-zero constant curvature 
to (totally geodesic) null shells with \textit{completely general} matter contents, and to \textit{any} jump function $H(v,z^A)$.\

\section{A null shell with pressure and energy flux in Min\-kows\-ki space}\label{sec:ex:pressure:Mk:new}

We conclude the paper with an explicit example of a null shell with non-vanishing energy density $\rho$, energy flux $j$, and pressure $p$ (cf.\ \eqref{def:eg:flux:pressure}) in Minkowski space (hence in the case $\Lambda=0$).  
For simplicity, we restrict to the case where $(\mathbb{M}^{\pm}, \eta^{\pm})$ are $4$-dimensional spacetimes (i.e.\ $\n = 3$).\ As discussed before, the matching information is entirely encoded in the jump function $H$, which can be freely chosen as long as condition $\cp_vH>0$ is fulfiled.\ It is also worth emphasizing that the so-called Israel (or shell) equations \cite{barrabes1991thin, israel1966singular, lanczos1922bemerkung, lanczos1924flachenhafte,   mars1993geometry, senovilla2018equations} are satisfied for \textit{any} jump function\footnote{See \cite[Sect.\ 5.1]{manzano2022general} for a detailed discussion in the case of the matching across Killing horizons with bifurcations surfaces, or \cite[Eq.\  (6.9)]{manzano2021null}.}, so in particular they will hold for our choice of $H$ below.\ Now, consider the jump function 
\begin{align} 
\label{step:funct:example:final}
H(v,z^A)&=a v-b\log\lp\cosh\lp v\rp\rp-c\tanh( r) e^{-v^2}-\frac{\mathcal{H}_0r^2}{4}  \textup{erf}\lp r\rp,
\end{align}
where $r \defi \sqrt{(z^2)^2+(z^3)^2}$ 
and $a,b,c,\mathcal{H}_0\in\mathbb{R}$ are real numbers satisfying:\
\begin{align}
\label{constraints:constants}b\geq 2 c\geq0, \qquad 
\mathcal{H}_0\geq c,\qquad 
a>b+c. 
\end{align}

\begin{figure}[t!]
	\centering
	\includegraphics[width=0.48\linewidth]{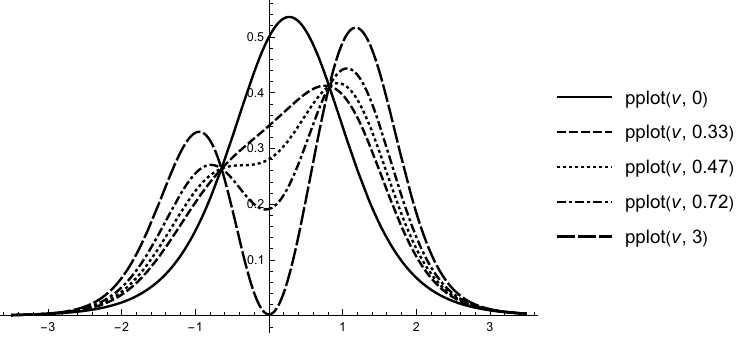} 
	\includegraphics[width=0.48\linewidth]{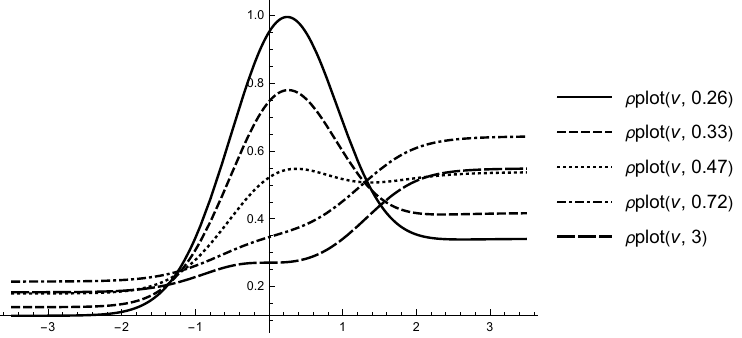}
        \includegraphics[width=0.48\linewidth]{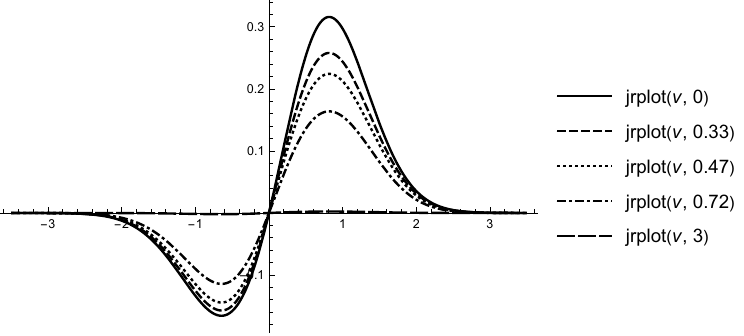}
                \includegraphics[width=0.48\linewidth]{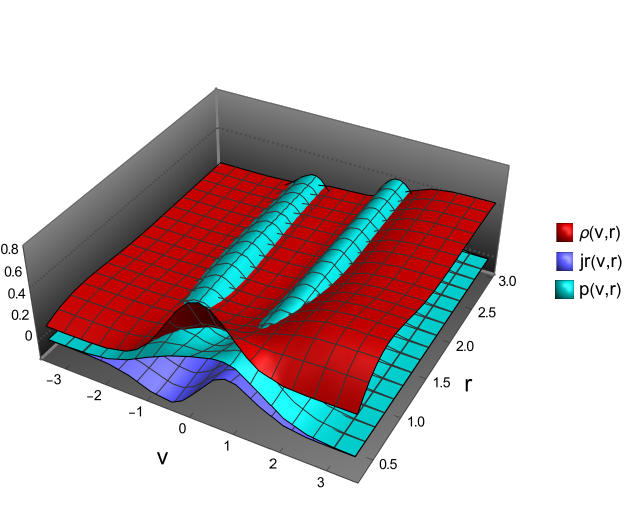}
	\caption{For the values $a=4$, $b=2$, $c=1$ and $\mathcal{H}_0=1.1$ satisfying \eqref{constraints:constants}, plots of the pressure $p(v,r)$, the energy density $\rho(v,r)$, and the radial component of the energy flux $j^r(v,r)\defi r^{-1}\big( z^2j^{z^2} +z^3j^{z^3}\big)$ as functions of $v$ for the values of $r$ indicated in the legends.\ 
    The coloured picture shows a $3$D-plot of $\rho(v,r)$, $j^r(v,r)$ and $p(v,r)$ for $v\in(-3,3), r\in(0.3,3)$, also in the case $a=4$, $b=2$, $c=1$ and $\mathcal{H}_0=1.1$.}
\label{fig:example}
\end{figure}

A direct calculation then yields
\begin{align}
\label{partial:H:ex}\cp_vH =&\spc a-b \tanh(v)+2cv\tanh( r) e^{-v^2},\\
\label{p:ex}p=&\spc \frac{1}{\cp_vH}\bigg( \frac{b}{\cosh^{2}(v)}+2 c\tanh( r)  e^{-v^2} \lp 2v^2-1\rp\bigg),\\
\label{rho:ex}\rho=& \spc \frac{1}{\cp_vH}\lp \frac{c e^{-v^2}}{\cosh^2(r)}\lp \frac{1}{r}-2\tanh(r)\rp+\mathcal{H}_0\lp \textup{erf}(r)+\frac{r e^{-r^2}}{\sqrt{\pi}} \lp\frac{5}{2}-r^2\rp\rp  \rp,\\
\label{jr:ex}j^r=&\spc \frac{2c v  e^{-v^2}}{(\cp_vH)\cosh^2(r)},
\end{align}
where $j^r$ is the radial component of the energy flux, namely its only non-zero component in the present case.\  
From \eqref{constraints:constants}-\eqref{jr:ex}, it is straightforward to check that $\cp_v H > 0$, and that the pressure and the energy density of the shell are no-where negative, i.e.\  $p(v,z^A)\geq0$ and $\rho(v,z^A)\geq0$.  
Figure \ref{fig:example} shows the profiles of the pressure, energy density, and energy flux for some values of the constants $a,b,c,\mathcal{H}_0$ as functions of $v$ for the indicated $r$-values. The pressure remains negligible for large values of $\vert v \vert$, and becomes non-zero only near $v=0$.\ Remarkably, variations in the energy density and energy flux occur precisely where the pressure is not negligible, while away from this region both $\rho\vert_{r\neq0}$ and $j^r$ remain approximately constant.\ In particular, 
$\rho\vert_{r\neq0}$ is nearly zero for large negative values of $v$, and asymptotically approaches a positive constant once the effect of the pressure has already taken place. A similar behaviour was also observed in \cite[Sect.\ 6]{manzano2021null}, and suggests that a positive pressure contributes to an overall increase in the energy density of the shell.\ For $r=0$, on the other hand, the energy density diverges (i.e.\ $\lim_{r\rightarrow0}\rho(v,r)=+\infty$), which physically is compatible with the existence of a particle moving at the speed of light along the null generator $\sigma\defi\{z^2=0,z^3=0\}\subset\nullhyp$, cf.\ the famous Aichelburg-Sexl solution \cite{AS:71}.  Regarding the energy flux vector field $j$, it is oriented towards lower (resp.\ higher) values of the radius $r$ when $v<0$ (resp.\ $v>0$), and it vanishes at $v=0$.\ Moreover, it decays to zero both in the limits $v\rightarrow\pm\infty$ and $r\rightarrow\infty$.\ Thus, the energy flows towards $r=0$ prior to $v=0$ and outwards thereafter.

By particularizing \eqref{c0metricprevious} and \eqref{metric:distributional:Mk} to the present case, one obtains the explicit expressions for the Lipschitz continuous and the distributional metric forms of the matched spacetime. These expressions are rather involved in the case with arbitrary values of $a$, $b$, $c$ and $\mathcal{H}_0$ satisfying \eqref{constraints:constants}. For this reason, we next present them in a simpler setting with non-zero pressure, but vanishing energy density and energy flux.\ 
\begin{example}
Let us fix $a=2$, $b=1$, $c=\mathcal{H}_0=0$, so that $H(v,z^B)=a v-b\log\lp\cosh\lp v\rp\rp $ (cf.\ \eqref{step:funct:example:final}). Then $[\Y_{vz^A}]=[\Y_{z^Az^B}]=0$ (cf.\ \eqref{YpmMk:and:tauMk}), $j^A=\rho=0$ (cf.\ \eqref{partial:H:ex}-\eqref{jr:ex}), and the Lipschitz continuous and the distributional metric forms \eqref{c0metricprevious}, \eqref{metric:distributional:Mk} read
\begin{align*}
g&=-2dudv +\delta_{AB}dz^Adz^B-2u_+( u) \cosh^{-2}(v)\lp 2-\tanh (v)\rp^{-1}dv^2,\\
g&=-2d\cu d\cv+\delta_{AB}d\cx^A \cx^B+\frac{2\big(\tanh (v)-3\big) \big(\tanh (v)-2\big) }{\big(\tanh (v)-4\big) \tanh (v)+5}\Big(v-\log \big(\cosh (v)\big)\Big)\delta(u) d\cu^2.
\end{align*}
\end{example}
It is by virtue of the formalism of matching developed in \cite{manzano2021null,manzano2022general,manzano2023matching}, and in particular as an application of the formalism of hypersurface data \cite{mars2013constraint,mars2020hypersurface}, that the previous examples of a shell with a highly non-trivial matter contents along a null hyperplane in  Minkowski space has been possible to analyze.\ Constructions of this type, which are clearly interesting both from a geometric and from physical viewpoint, are typically disregarded in the literature due to the lack of a sufficiently general formalism of matching that also allows for very explicit calculations.\ It is precisely this flexibility and computational applicability that make the matching framework employed here such a useful tool for the systematic study of null shells.

\appendix

\section{Abstract matching of semi-Riemannian manifolds}\label{app:B}

In this Appendix, we describe the matching of general semi-Riemannian manifolds from the abstract point of view of adjunction spaces. This will enable us to prove that the regularity of the metric of the matched spacetime is locally Lipschitz continuous. This can be seen as a straightening and sharpening of \cite[Sec.\ 3, Prop.]{clarke1987junction}. 

We start by recalling a classical result on the gluing of two smooth manifolds along their boundaries. Note that a \textit{regular domain} in a smooth manifold (with or without boundary) is a smooth codimension-$0$ submanifold with boundary such the embedding map is a proper map, i.e., preimages of compact sets are compact.

\begin{theorem}[Attaching manifolds along their boundaries, Thm.\ 9.29 in \cite {lee2003introduction}]
    Let $M_1$ and $M_2$ be smooth manifolds both of dimension $n$ with nonempty boundaries $\Sigma_i := \partial M_i$, and let $\phi: \Sigma_1 \to \Sigma_2$ be a diffeomorphism. Let $M_1 \cup_{\phi} M_2$ be the adjunction space formed by identifying $x \in \Sigma_1$ with $\phi(x) \in \Sigma_2$\footnote{That is the quotient space of the disjoint union $M_1\dot\cup M_2$ by the equivalence relation $\Sigma_1\ni x_1\sim \phi(x_1)\in \Sigma_2$.}. Then $M_1 \cup_{\phi} M_2$ is a topological manifold without boundary, and has a smooth structure such that there are regular domains $M_1',M_2' \subseteq M_1 \cup_{\phi} M_2$ diffeomorphic to $M_1$ and $M_2$, respectively, satisfying $M_1' \cup M_2' = M_1 \cup_{\phi} M_2$, $M_1' \cap M_2' = \partial M_1' = \partial M_2'$.
\end{theorem}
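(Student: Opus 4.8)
The plan is to build $M := M_1 \cup_{\phi} M_2$ first as a topological space and then hand it a compatible smooth atlas, with the \emph{collar neighborhood theorem} as the load-bearing input. Writing $q : M_1 \,\dot\cup\, M_2 \to M$ for the quotient map and $\Sigma := q(\Sigma_1) = q(\Sigma_2)$ for the seam, I would first record the point-set facts: gluing the manifolds $M_i$ along the \emph{closed} subsets $\Sigma_i$ via the homeomorphism $\phi$, and using normality of manifolds, makes $M$ Hausdorff and second countable, and $q$ restricts to topological embeddings of $M_i \setminus \Sigma_i$ and of $\Sigma_i$. Away from $\Sigma$, $M$ is locally Euclidean because it inherits the interior charts of $M_1$ and $M_2$. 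Near $\Sigma$, I would invoke the collar theorem to choose smooth embeddings $\iota_i : \Sigma_i \times [0,1) \hookrightarrow M_i$ onto open neighborhoods of $\Sigma_i$ with $\iota_i(\cdot,0) = \mathrm{id}_{\Sigma_i}$, and then define $C : \Sigma_1 \times (-1,1) \to M$ by $C(x,t) := q(\iota_1(x,-t))$ for $t \le 0$ and $C(x,t) := q(\iota_2(\phi(x),t))$ for $t \ge 0$; the two formulas agree at $t = 0$ since $\iota_1(x,0) = x$ is identified with $\phi(x) = \iota_2(\phi(x),0)$. One checks that $C$ is a homeomorphism onto an open neighborhood of $\Sigma$, which gives local Euclideanness there and shows $M$ is a topological manifold without boundary.

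For the smooth structure I would take the atlas consisting of all interior charts of $M_1$ and of $M_2$ (transported by $q$) together with the \emph{collar charts} $(\psi \times \mathrm{id}) \circ C^{-1}$, with $\psi$ a chart of $\Sigma_1$, and then verify pairwise smooth compatibility. Two interior charts from the same $M_i$ are compatible by hypothesis, and interior charts of $M_1$ and $M_2$ have disjoint domains in $M$. The only transitions needing an argument are those between a collar chart and an interior chart, say of $M_1$: on the common domain, $C$ factors as $q \circ \iota_1 \circ (\mathrm{id} \times (t \mapsto -t))$, a smooth embedding into $M_1 \setminus \Sigma_1$, so composing with an interior chart of $M_1$ yields a smooth transition, and symmetrically on the $M_2$ side. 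This shows the atlas is smooth; the resulting structure depends on the chosen collars, which is why the statement asserts only existence (uniqueness up to diffeomorphism fixing $\Sigma$ is a genuinely harder matter tied to uniqueness of collars, and is not needed here).

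The last task is to exhibit the regular domains. I would set $M_i' := q(M_i)$; then $q|_{M_i} : M_i \to M_i'$ is a homeomorphism (it is a closed, continuous, injective map), and comparing the atlas above with that of $M_i$ — interior charts, plus the half $\{t \ge 0\}$ or $\{t \le 0\}$ of each collar chart — shows it is in fact a diffeomorphism onto a codimension-$0$ submanifold with boundary $\partial M_i' = \Sigma$. Since each $M_i'$ is closed in $M$, the inclusion $M_i' \hookrightarrow M$ is proper, hence $M_i'$ is a regular domain. Finally $M_1' \cup M_2' = q(M_1) \cup q(M_2) = M$, and a point lies in $M_1' \cap M_2'$ exactly when it is the $q$-image of a point of $\Sigma_1$ (equivalently of $\Sigma_2$), so $M_1' \cap M_2' = \Sigma = \partial M_1' = \partial M_2'$.

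The step I expect to be the real obstacle is the smooth compatibility of the collar charts with the ambient smooth structures of $M_1$ and $M_2$ across the seam (second paragraph): getting the two halves to fit together smoothly rather than merely continuously is exactly what forces the use of collars, and the proof of the collar neighborhood theorem itself — flowing along a complete vector field transverse to $\partial M_i$ — is the genuine content behind the statement. Everything else is bookkeeping with quotient topologies and restriction of atlases.
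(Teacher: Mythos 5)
The paper does not prove this theorem; it is quoted as a verbatim citation of Theorem~9.29 from Lee's \textit{Introduction to Smooth Manifolds}, so there is no in-paper argument to compare against. Your proof is correct and, as far as I can tell, mirrors the textbook approach: the load-bearing step is the collar neighborhood theorem, used to splice the two one-sided collars $\iota_1,\iota_2$ into a single two-sided tube $C:\Sigma_1\times(-1,1)\to M$ around the seam, with the resulting product charts (together with the transported interior charts of $M_1,M_2$) declared to be the smooth atlas. Two points you compress but which deserve a line in a full write-up: (i) that $q|_{M_i}$ is a closed map --- because $q^{-1}(q(F))=F\cup\phi(F\cap\Sigma_1)$ is closed in $M_1\,\dot\cup\,M_2$ whenever $F\subseteq M_1$ is closed --- hence a homeomorphism onto its image; this is what actually makes $C$ a homeomorphism onto the open set $q(\iota_1(\Sigma_1\times[0,1)))\cup q(\iota_2(\Sigma_2\times[0,1)))$ rather than merely a continuous bijection (note that $q|_{M_i}$ is \emph{not} an open map in general, since the image of an open set meeting $\Sigma_i$ sits inside the boundary of the other piece); and (ii) that two collar charts built from the same fixed $C$ are mutually compatible, which is immediate since their transition is a chart transition of $\Sigma_1$ crossed with the identity in $t$. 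Your remarks on properness of the inclusions $M_i'\hookrightarrow M$ and on why the statement asserts only existence of the smooth structure (because it depends on the chosen collars) are both accurate.
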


Next we turn to the issue of including a metric in the construction, that is the gluing of semi-Riemannian manifolds $(M_1,g_1)$ and $(M_2,g_2)$. As it turns out, upgrading the diffeomorphism $\phi$ of the boundaries to an isometry does not quite do the job. More precisely, the existence of an isometry $\phi:\Sigma_1\to\Sigma_2$ is not a sufficient condition to guarantee the existence of a continuous metric $g$ on $M_1 \cup_{\phi} M_2$ that coincides with the original metrics on $M_i$, i.e.\ $g|_{M_i}=g_i$ $(i=1,2$). Since this fact seems not to have been universally appreciated in the literature, we next give a simple counter example.

\begin{example}
    Consider $M_1:=\mathbb{R} \times (-\infty,0]$ with metric $g_1:=dx^2 + dy^2$ and $M_2:=\mathbb{R} \times [0,\infty)$ with metric $g_2:=dx^2 + 2dy^2$. Then $\partial M_1 = \mathbb{R} \times \{0\} = \partial M_2$. Moreover, $id_{\mathbb{R} \times \{0\}}$ is an isometry of the boundaries, and $M_1 \cup_{id} M_2 = \mathbb{R}^2$. But there is no continuous metric on $\mathbb{R}^2$ agreeing with the given metrics on either half space.
\end{example}

This example also hints at the solution of the issue. We have to additionally ask for the isometry to be compatible with the action of the metrics in the transverse directions. We formulate this condition in a way most directly related to the formalism used in the body of this text, cf.\ \eqref{junctcondST:1}--\eqref{junctcondST:3}.

\begin{definition}[$\xi$-aligning isometry]\label{Definition: xialigningisometry}
Let $(M_1,g_1)$ and $(M_2,g_2)$ be smooth semi-Riemannian manifolds with boundary and let $\xi \in C^{\infty}(TM_1|_{\partial M_1})$ be transversal. Let $\phi: \partial M_1 \to \partial M_2$ be a smooth diffeomorphism. Let $M:=M_1 \cup_\phi M_2$ denote the corresponding adjunction space.
We say $\phi$ is a \emph{$\xi$-aligning isometry} if
\begin{enumerate}
        \item[(i)] $\phi: \partial M_1 \to \partial M_2$ is a smooth isometry, i.e.\ 
        $\phi_*(g_1|_{\partial M_1}) = g_2 |_{\partial M_2}$,
    \item[(ii)] $\phi_*(g_1(\xi,\cdot)|_{\Sigma}) = g_2(\xi,\cdot)|_\Sigma\ \in \Omega^1(\Sigma)$,
    \item[(iii)] $\phi_*(g_1(\xi,\xi)) = g_2(\xi,\xi)\ \in C^\infty(\Sigma)$.
   \end{enumerate}
\end{definition}

Note that if we understand $M_1$, $M_2$ (equipped with $g_1$ resp.\ $g_2$) to be subsets of $M$ with common boundary $\Sigma = \partial M_1 = \partial M_2$ then the above conditions can be written in the simplified form
\begin{center}
     (i)  $g_1 |_{\Sigma} = g_2|_{\Sigma}$,\quad
     (ii) $g_1(\xi,\cdot)|_{\Sigma} = g_2(\xi,\cdot)|_\Sigma \in \Omega^1(\Sigma)$,\quad
     (iii) $g_1(\xi,\xi) = g_2(\xi,\xi) \in C^\infty(\Sigma)$.
\end{center}

It is easy to see that if $\phi$ is a $\xi$-aligning isometry and if $\tilde{\xi}$ is any other transversal vector field to $\partial M_1$, then $\phi$ is also $\tilde{\xi}$-aligning. Thus, the definition does not depend on the specific choice of $\xi$, but only on the existence of such a vector field (which is a nontrivial condition on $g_i$ and $\phi$).

\begin{theorem}[Semi-Riemannian matching]\label{Theorem: abstractmatching}
Let $(M_1,g_1)$ and $(M_2,g_2)$ be smooth semi-Rie\-mannian manifolds both 
of dimension $n$ and with the same signature. Suppose both have nonempty boundaries $\partial M_i$ and let $\phi:\partial M_1 \to \partial M_2$ a $\xi$-aligning isometry. Then there exists a unique locally Lipschitz continuous semi-Riemannian metric $g$ of the same index as $g_i$ on the adjunction space $M:=M_1 \cup_\phi M_2$ agreeing with $g_i$ on $M_i$. In particular, $g$ is smooth on $M_i \setminus \Sigma$, $i=1,2$, where $\Sigma$ is the common boundary of $M_1$ and $M_2$ in $M$.
\end{theorem}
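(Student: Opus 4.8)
The plan is to define $g$ by the obvious piecewise prescription and then verify its regularity in suitably adapted charts. First I would invoke the attaching theorem above to give $M=M_1\cup_\phi M_2$ its smooth structure, and henceforth identify $M_1,M_2$ with the regular domains $M_1',M_2'$, so that $M_1\cup M_2=M$, $M_1\cap M_2=\Sigma=\partial M_1=\partial M_2$, each $g_i$ is a smooth metric on the manifold-with-boundary $M_i$, and the $\xi$-aligning conditions hold in the ``simplified form'' (i)--(iii) for subsets of $M$. I would then set $g|_{M_i}:=g_i$. This is consistent on the overlap $\Sigma$: at $p\in\Sigma$ write $T_pM=T_p\Sigma\oplus\mathbb{R}\xi_p$ for a transversal $\xi$; condition (i) makes $g_1$ and $g_2$ agree on $T_p\Sigma\times T_p\Sigma$ and (ii)--(iii) make them agree on the remaining $\xi$-slots, so $g_1|_p=g_2|_p$ as bilinear forms on all of $T_pM$. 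Since $g$ coincides pointwise with one of the $g_i$, it is a nondegenerate symmetric $(0,2)$-tensor of the common index, and it is smooth on the open sets $\operatorname{int}M_i$; the whole issue is regularity across $\Sigma$.

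Next I would fix $p\in\Sigma$ and exploit that $M_1$ is a regular domain in the smooth manifold $M$: there is a chart $(U,\psi)$ of $M$ around $p$, with $\psi(U)$ a cube and $\psi=(x^1,\dots,x^{n-1},t)$, such that $\Sigma\cap U=\{t=0\}$ and $\psi(U\cap M_1)=\psi(U)\cap\{t\le 0\}$; from $M_1\cup M_2=M$ and $M_1\cap M_2=\Sigma$ one gets $\psi(U\cap M_2)=\psi(U)\cap\{t\ge 0\}$ at once. The restriction of $\psi$ to $U\cap M_i$ is a boundary chart for $M_i$, so the coefficients $(g_i)_{\mu\nu}$ are smooth on their half of the cube \emph{up to and including} $t=0$. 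Choosing the transversal $\xi:=\partial_t|_\Sigma$ -- legitimate because $\xi$-alignment does not depend on the transversal, by the remark preceding the theorem -- conditions (i)--(iii) say exactly that $(g_1)_{\mu\nu}(x,0)=(g_2)_{\mu\nu}(x,0)$ for all $\mu,\nu$. Hence the piecewise coefficients $g_{\mu\nu}$ are continuous across $\{t=0\}$ and are $C^1$ with locally bounded gradient on each closed half.

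To finish locally, I would use the elementary fact that a function on a convex box which is continuous, and $C^1$ with bounded gradient on each of $\{t\le 0\}$ and $\{t\ge 0\}$, is Lipschitz: for $q,q'$ on opposite sides the segment $[q,q']$ meets $\{t=0\}$ at some $r$, and $|g_{\mu\nu}(q)-g_{\mu\nu}(q')|\le L(|q-r|+|r-q'|)=L|q-q'|$, with $L$ the larger of the two one-sided constants. Covering $M$ by such charts around the points of $\Sigma$ together with $\operatorname{int}M_1$ and $\operatorname{int}M_2$ then yields that $g$ is locally Lipschitz on $M$ and smooth off $\Sigma$. Uniqueness is immediate: any metric agreeing with $g_i$ on each $M_i$ equals $g$ because $M=M_1\cup M_2$; and the index is the common one since $g$ agrees pointwise with a $g_i$.

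The hard part will be the chart construction in the second step: one needs a single chart of $M$ presenting $M_1$ and $M_2$ as the two coordinate half-spaces in which, \emph{simultaneously}, each $g_i$ is smooth up to the dividing hyperplane, and one must know the alignment conditions can be tested against the coordinate field $\partial_t$ rather than the a priori given $\xi$ -- this is precisely the chart-independence remark stated before the theorem, combined with the standard compatibility between regular domains and manifold-with-boundary charts. The Lipschitz gluing estimate, the well-definedness on $\Sigma$, uniqueness, and preservation of index are all routine.
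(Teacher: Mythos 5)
Your proof is correct and follows essentially the same route as the paper's: check pointwise agreement of $g_1$ and $g_2$ on $T_pM$ for $p\in\Sigma$ using the $\xi$-aligning conditions in a basis adapted to $T_p\Sigma\oplus\mathbb{R}\xi_p$, then conclude local Lipschitz continuity from piecewise smoothness up to $\Sigma$. The only difference is that you spell out the final Lipschitz step (slice charts and the segment-gluing estimate) in more detail, whereas the paper states it compactly by noting the one-sided derivatives and their boundary limits exist, hence $g\in W^{1,\infty}_{\mathrm{loc}}$.
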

\begin{proof}
    Given the identification of $M_i$ with the corresponding subsets of the adjunction space $M$, the property that $\phi$ is $\xi$-aligning simply means that it matches $g_1$ to $g_2$: Indeed, to define $g$ on $M$, we only need to check that $g_1(v,v) = g_2(v,v)$ for every $v \in T_pM$ with $p \in \Sigma$. (The fact that $g_1(v,w) = g_2(v,w)$ for all $v,w \in T_pM$ then follows from polarization.) To see this, let $b_1,\dots, b_n$ be a basis of $T_p \Sigma$. Since $\xi$ is transversal, $\xi(p),b_1,\dots,b_n$ is then a basis of $T_pM$. Writing 
    \begin{equation}
        v = v^0 \xi(p) + \sum_{i=1}^n v^i b_i,
    \end{equation}
    it follows that
    \begin{align*}
        g_1(v,v) = (v^0)^2 g_1(\xi(p),\xi(p)) + 2\sum_{i=1}^n v^0 v^i g_1(\xi(p),b_i) + \sum_{i,j=1}^n v^i v^j g_1(b_i,b_j).
    \end{align*}
    The defining properties of being a $\xi$-aligning isometry precisely say that the terms on the right hand side can be replaced with the corresponding expressions with $g_2$. Hence, $g_1(v,v) = g_2(v,v)$ as claimed.

    From the above we see that $g_1$ and $g_2$ fit at $\Sigma$ to give a continuous semi-Riemannian metric $g$ of the same signature as $g_i$ on $M$. Clearly $g$ is smooth on $M_i \setminus \Sigma$, as it agrees with the smooth metric $g_i$ on that open set. To see that $g$ is locally Lipschitz continuous on all of $M$, observe that its derivatives exist everywhere on $M \setminus \Sigma$ and the limits of the derivatives from either $M_1$ or $M_2$ to $\Sigma$ exist as well, hence $g$ is in $W^{1,\infty}_{\mathrm{loc}}$.
\end{proof}

\begin{remark}[On $\xi$-aligning isometries]
\begin{enumerate}
    \item[]
    \item[(i)] In many cases, there is a natural choice of $\xi$: E.g., if $\partial M \subseteq M$ is Lorentzian or Riemannian, then one would usually choose $\xi$ to be the unit normal vector field.
    \item[(ii)] It is in fact not necessary that the boundary have a transversal vector field to perform the gluing: If there is a collection $\xi_i$ of local transversal vector fields that the isometry $\phi$ maps accordingly as in Definition \ref{Definition: xialigningisometry}, then the proof of Theorem \ref{Theorem: abstractmatching} goes through unchanged. Local transversal vector fields always exist, but the property that an isometry $\phi$ align them appropriately is a nontrivial condition.
    \item[(iii)] If $(M_1,g_1)$ and $(M_2,g_2)$ in Theorem \ref{Theorem: abstractmatching} are spacetimes, i.e., of Lorentzian signature and time orientable), then also $(M,g)$ can be made time orientable by taking a convex combination of the time orientations of $(M_i,g_i)$ across a double collar neighbourhood of $\Sigma$. In the embeddings $M_i \subseteq M$, one would need to possibly invert the time orientation of one of the spacetimes to make this work. In the setting of matching spacetimes with boundary, one often requires that both boundaries have specified transversal vector fields and that the "orientation" of these is preserved by the matching, but this is equivalent to the "matching" of the time orientations as just mentioned. 
    \item[(iv)] Higher regularity of the ``glued metric'' $g$ can be achieved under additional conditions. If $\phi$ preserves the second fundamental forms of $\Sigma$ then $g$ is in $C^{1,1}$ (i.e. its first derivatives are locally Lipschitz) and if $\phi$ preserves the curvature operators on $\Sigma$ it is even $C^{2,1}$, see \cite[Lem.\ 5.1.1]{Spi:16} for details. However, in the context of matching spacetimes we have seen that a jump in the second fundamental form is associated to the matter content of the shell and a regularity higher than Lipschitz continuity is not possible in such a case.
\end{enumerate}
\end{remark}

\section{Regularization products}\label{app:distributional:identities}

In this appendix we calculate the regularisation products used in the computations of Section \ref{sec:dist:metric}. Our method can be placed in the general classification of intrinsic distributional products of \cite[p.\ 69]{Obe:92}, where it is called by the (hardly celebrity) name ``model product (7.4)''. Our main reference for distribution theory is \cite{FJ:98} and we freely use its notation.

To begin with, let $\rho$ be a mollifier, i.e.\ a smooth function $\rho\in C^{\infty}(\mathbb{R}^n)$ with compact support $\supp(\rho)$ in the unit ball $ B_1(0)$ that satisfies $\int\rho(z)dz=1$.\ Consider $\eps\in (0,1]$ and set $\rho_{\eps}(x)\defi \frac{1}{\eps}\rho\lp\frac{x}{\eps}\rp$.\ Then $\rho_{\eps}$ is called a \textit{model-$\delta$-net} and is used to regularize distributions via convolution. More precisely, we regularize an arbitrary distribution $u\in\mathcal{D}'(\mathbb{R}^n)$ via
\begin{equation}\label{eq:conv}
    u*\rho_\eps(x)\defi\langle u(x-y),\rho_\eps(y)\rangle, 
\end{equation}
which is a smooth function (in $x$) which weakly converges to $u$ as $\eps\to 0$. Here $\langle\ ,\ \rangle$ denotes the distributional action (in the variable $y$) and we indeed have $\rho_\eps\to\delta$ and hence $ u*\rho_\eps(x)\to\langle u(x-y),\delta(y)\rangle=u(x)$. In particular, the model-$\delta$ net $\rho_\eps$ itself is a very general regularization of the Dirac $\delta$.

The \textit{model product} $[u\,v]$ of two arbitrary distributions $u$ and $v$ is then defined as the weak limit 
\begin{equation}
    [u\,v]\defi\lim_{\eps\to 0} (u*\rho_\eps)\, (v*\rho_\eps), 
\end{equation} 
provided the limit exists for all model-$\delta$ nets $\rho_\eps$ and is independent of the mollifier $\rho$ chosen.
We will use this product in the following but generally denote it by $uv$. We remark, however, that it is a routine task to extend our reasoning to the more general class of \textit{strict-$\delta$ nets} and the (hence more special) strict product, see \cite[Def.\ 7.1]{Obe:92}.

For reasons explained in Section \ref{sec:dist:metric} it is natural in our context to model the Heaviside function $\theta$ by the prime function of $\rho_\eps$, which is consistent with \eqref{eq:conv}. Indeed we have 
\begin{align}
\theta_{\eps}(x)\defi \theta*\rho_\eps(x)=  \int_{-1}^x\rho_{\eps}(z)dz=\frac{1}{\eps}\int_{-1}^x\rho\lp\frac{z}{\eps}\rp dz=\int_{-1/\eps}^{x/\eps}\rho\lp y\rp  dy=\int_{-1}^{x/\eps}\rho\lp y\rp  dy.\ 
\end{align}
In this case the convergence of $\theta_\eps$ to $\theta$ is even stronger as we have 
\begin{align}
\label{theta:eq}\theta_{\eps}(x)=\int_{-1}^{x/\eps}\rho\lp y\rp  dy=\lb 
\begin{array}{lll}
0 & \textup{if} & x\neq0,\quad x\leq -\eps,\\
1 & \textup{if} & x\neq0,\quad x\geq \eps, \\
\int_{-1}^{0}\rho\lp y\rp  dy & \textup{if} & x=0,
\end{array}\rd
\end{align}
and so $\theta_{\eps}$ converges to $\theta$ pointwise almost everywhere and locally uniformly.\ Observe that, as a direct consequence of \eqref{theta:eq},  
\begin{align}
(\theta_{\eps})^2(x)=\lp\int_{-1}^{x/\eps}\rho\lp y\rp  dy\rp^2=\lb 
\begin{array}{lll}
	0 & \textup{if} & x\neq0,\quad x\leq -\eps,\\
	1 & \textup{if} & x\neq0,\quad x\geq \eps, \\
	\lp \int_{-1}^{0}\rho\lp y\rp  dy\rp^2 & \textup{if} & x=0,
\end{array}\rd
\end{align}
and so $(\theta_{\eps})^2$ also converges to $\theta$ pointwise almost everywhere and locally uniformly. Since we did not specify $\rho$, we have for the model product
$\theta^2=\theta$.

Let us now compute the model product $\theta\delta$. For any test function $\nfi $, we find
\begin{align}
\nn \la \theta_{\eps}\cdot\rho_{\eps},\nfi\ra&=\int_{\mathbb{R}}\theta_{\eps}(z)\rho_{\eps}(z)\nfi(z)dz=\frac{1}{\eps}\int_{\mathbb{R}}\lp \int_{-1}^{z/\eps}\rho\lp y\rp  dy\rp\rho\lp\frac{z}{\eps}\rp\nfi(z)dz\\
\label{app:A:eq0}&=\int_{\mathbb{R}}\lp \int_{-1}^{ t}\rho\lp y\rp  dy\rp\rho\lp t\rp\nfi(\eps t) dt=\int_{-1}^1\lp \int_{-1}^{ t}\rho\lp y\rp  dy\rp\rho\lp t\rp\nfi(\eps t) dt.
\end{align}
Since $\nfi(\eps t)$ uniformly converges to $\nfi(0)$ on $[-1,1]$ as $\eps\to 0$, it follows that 
\begin{align}
\la \theta_{\eps}\cdot\rho_{\eps},\nfi\ra\stackbin[\eps\rightarrow0]{}{\longrightarrow}\nfi(0)\, \mathcal{I},\qquad \textup{where}\qquad \mathcal{I}\defi \int_{-1}^1\lp \int_{-1}^{ t}\rho\lp y\rp  dy\rp\rho\lp t\rp dt.
\end{align} 
We finally calculate $\mathcal{I}$ using integration by parts
\begin{align}
\nn \mathcal{I}&= \int_{-1}^1\lp \int_{-1}^{ t}\rho\lp y\rp  dy\rp\rho\lp t\rp dt=\lc\lp \int_{-1}^{ t}\rho\lp y\rp  dy \rp^2 \rc^1_{-1}-\int_{-1}^1 \lp \int_{-1}^t \rho(s)ds\rp \rho\lp t\rp  dt\\
\label{app:A:eq1}&=\lc \lp \int_{-1}^{ x/\eps}\rho\lp y\rp  dy \rp^2 \rc^{\eps}_{-\eps}-\mathcal{I}=\lc \theta_{\eps}^2(x) \rc^{\eps}_{-\eps}-\mathcal{I}=\lp \theta_{\eps}^2(\eps)-\theta_{\eps}^2(-\eps) \rp-\mathcal{I}=1-\mathcal{I}.
\end{align}
Therefore, $\mathcal{I}=\frac{1}{2}$ and so $\la \theta_{\eps}\cdot\rho_{\eps},\nfi\ra\to \frac{\nfi(0)}{2}$. Since we did not specify the mollifier $\rho$ in all our considerations we have calculated the model product $\theta\delta=\frac{1}{2}\delta$. 

Finally, it is worth emphasizing once more that 
the model product and our results in particular do not depend on the choice of the mollifier $\rho$ so that they are stable with respect to a large class of regularisations. Especially there was no need to suppose that $\theta_{\eps}(0)=\int_{-1}^0\rho(z)dz=\frac{1}{2}$.

\section*{Acknowledgements}
The authors owe special thanks to Marc Mars for many helpful discussions and for support with the calculations and to Ji\v{r}\'{\i} Podolsk\'{y} for his continued encouragement.

This research was funded in part by Ministerio de Ciencia, Innovaci{\'o}n y Universidades [PID2021-122938NB-I00] and [FPU17/03791], and in part by the Austrian Science Fund (FWF) [Grant DOI 10.55776/EFP6 and 10.55776/J4913]. For open access purposes, the authors have applied a CC BY public copyright license to any author accepted manuscript version arising from this submission.

\begingroup
\let\itshape\upshape
\bibliographystyle{acm}
\addcontentsline{toc}{section}{References}
\bibliography{ref}

\end{document}